\theoremstyle{plain}
\newtheorem{lemma}{Lemma}[section]
\newtheorem{theorem}[lemma]{Theorem}
\newtheorem*{theorem-main}{Theorem~\ref{thm:main}}
\newtheorem{proposition}[lemma]{Proposition}
\theoremstyle{definition}
\newtheorem{definition}[lemma]{Definition}
\newtheorem{example}[lemma]{Example}
\newtheorem{nonexample}[lemma]{Non-Example}
\newtheorem{question}[lemma]{Question}
\newtheorem{remark}[lemma]{Remark}
\newcommand{\R}{\mathbb{R}}
\newcommand{\Z}{\mathbb{Z}}
\newcommand{\f}{\varphi}
\newcommand{\vr}[2]{\mathrm{VR}(#1;#2)}
\newcommand{\diam}{\mathrm{diam}}
\newcommand{\image}{\mathrm{im}}
\newcommand{\interior}{\mathrm{int}}
\newcommand{\conv}{\mathrm{Conv}}
\newcommand{\rank}{\mathrm{rank}}
\newcommand{\chull}{\mathrm{cHull}}
\newcommand{\locdiam}{\mathrm{localDiam}}
\newcommand{\nicea}{\mathcal{A}}
\newcommand{\rr}{\mathcal{R}}
\begin{document}

\title{Lower bounds on the homology of Vietoris--Rips complexes of hypercube graphs}
\author{Henry Adams}
\email{henry.adams@ufl.edu}
\author{\v{Z}iga Virk}
\email{ziga.virk@fri.uni-lj.si}
\date{\today}

\begin{abstract}
We provide novel lower bounds on the Betti numbers of Vietoris--Rips complexes of hypercube graphs of all dimensions, and at all scales.
In more detail, let $Q_n$ be the vertex set of $2^n$ vertices in the $n$-dimensional hypercube graph, equipped with the shortest path metric.
Let $\vr{Q_n}{r}$ be its Vietoris--Rips complex at scale parameter $r \ge 0$, which has $Q_n$ as its vertex set, and all subsets of diameter at most $r$ as its simplices.
For integers $r<r'$ the inclusion $\vr{Q_n}{r}\hookrightarrow\vr{Q_n}{r'}$ is nullhomotopic, meaning no persistent homology bars have length longer than one, and we therefore focus attention on the individual spaces $\vr{Q_n}{r}$.
We provide lower bounds on the ranks of homology groups of $\vr{Q_n}{r}$.
For example, using cross-polytopal generators, we prove that the rank of $H_{2^r-1}( \vr{Q_{n}}{r})$ is at least $2^{n-(r+1)}\binom{n}{r+1}$.
We also prove a version of \emph{homology propagation}: 
if $q\ge 1$ and if $p$ is the smallest integer for which $\rank H_q(\vr{Q_p}{r})\neq 0$, then $\rank H_{q}( \vr{Q_{n}}{r}) \ge \sum_{i=p}^n 2^{i-p} \binom{i-1}{p-1}
\cdot \rank H_{q}( \vr{Q_{p}}{r})
$
for all $n \ge p$.
When $r\le 3$, this result and variants thereof provide tight lower bounds on the rank of $H_{q}(\vr{Q_{n}}{r})$ for all $n$, and for each $r \ge 4$ we produce novel lower bounds on the ranks of homology groups.
Furthermore, we show that for each $r\ge 2$, the homology groups of $\vr{Q_{n}}{r}$ for $n \ge 2r+1$ contain propagated homology not induced by the initial cross-polytopal generators.
\end{abstract}

\keywords{Vietoris--Rips complexes, Clique complexes, Hypercubes, Betti numbers}

\thanks{\emph{MSC codes.} 05E45, 55N31, 55U10}

\maketitle


\section{Introduction}

Let $Q_n$ be the vertex set of the hypercube graph, equipped with the shortest path metric.
In other words, $Q_n$ can be thought of the set of all $2^n$ binary strings of $0$'s and $1$'s equipped with the Hamming distance, or alternatively, as the set $\{0,1\}^n\subseteq \R^n$ equipped with the $\ell^1$ metric.

In this paper, we study the topology of the \emph{Vietoris--Rips simplicial complexes} of $Q_n$.
Given a metric space $X$ and a scale $r\ge 0$, the Vietoris--Rips simplicial complex $\vr{X}{r}$ has $X$ as its vertex set, and a finite subset $\sigma \subseteq X$ as a simplex if and only if the diameter of $\sigma$ is at most $r$.
Originally introduced for use in algebraic topology~\cite{Vietoris27} and geometric group theory~\cite{bridson2011metric,Gromov},
Vietoris--Rips complexes are now a commonly-used tool in applied and computational topology in order to approximate the shape of a dataset~\cite{Carlsson2009,EdelsbrunnerHarer}.
Important results include the fact that nearby metric spaces give nearby Vietoris--Rips persistent homology barcodes~\cite{ChazalDeSilvaOudot2014,chazal2009gromov}, that Vietoris--Rips complexes can be used to recover the homotopy types of manifolds~\cite{Hausmann1995,Latschev2001,virk2021rips,majhi2023demystifying}, and that Vietoris--Rips persistent homology barcodes can be efficiently computed~\cite{bauer2021ripser}.
Nevertheless, not much is known about Vietoris--Rips complexes of manifolds or of simple graphs at large scale parameters, unless the manifold is the circle~\cite{AA-VRS1}, unless the graph is a cycle graph~\cite{Adamaszek2013,AAFPP-J}, or unless one restricts attention to $1$-dimensional homology~\cite{virk20201,gasparovic2018complete}.

Let $\vr{Q_n}{r}$ be the Vietoris--Rips complex of the vertex set of the $n$-dimensional hypercube at scale parameter $r$.
The homotopy types of $\vr{Q_n}{r}$ are known for $r\le 3$ (and otherwise mostly unknown); see Table~\ref{table:homotopy-types}.
For $r=0$, $\vr{Q_n}{0}$ is the disjoint union of $2^n$ vertices, and hence homotopy equivalent to a wedge sum $(2^n-1)$-fold wedge sum of zero-dimensional spheres.
For $r=1$, $\vr{Q_n}{1}$ is a connected graph (the hypercube graph), which by a simple Euler characteristic computation is homotopy equivalent to a $((n-2)2^{n-1}+1)$-fold wedge sum of circles.
For $r=2$, Adams and Adamaszek~\cite{adamaszek2022vietoris} prove that $\vr{Q_n}{2}$ is homotopy equivalent to a wedge sum of $3$-dimensional spheres; see Theorem~\ref{ThmAA} for a precise statement which also counts the number of $3$-spheres.
For $r=3$, Shukla proved in~\cite[Theorem~A]{shukla2022vietoris} that for $n\ge 5$, the $q$-dimensional homology of $\vr{Q_n}{3}$ is nontrivial if and only if $q=7$ or $q=4$.
The study of $r=3$ was furthered by Feng~\cite{feng2023homotopy} (based on work by Feng and Nukula~\cite{feng2023vietoris}), who proved that $\vr{Q_n}{3}$ is always homotopy equivalent to a wedge sum of $7$-spheres and $4$-spheres; see Theorem~\ref{ThmZiqin} for a precise statement which also counts the number of spheres of each dimension.
When $r=n-1$, $\vr{Q_n}{n-1}$ is isomorphic to the boundary of the cross-polytope with $2^n$ vertices, and hence is homeomorphic to a sphere of dimension $2^{n-1}-1$.
For $r\ge n$, the space space $\vr{Q_n}{n}$ is a complete simplex, and hence contractible.
However, there is an entire infinite ``triangle'' of parameters, namely $r\ge 4$ and $r\le n-2$, for which essentially nothing is known about the homotopy types of $\vr{Q_n}{r}$.

\begin{table}[htb]
{\footnotesize
\def\arraystretch{1.2}
\begin{tabular}{| >{$} c <{$} | >{$} c <{$} | >{$} c <{$} | >{$} c <{$} | >{$} c <{$} | >{$} c <{$} | >{$} c <{$} | >{$} c <{$} | >{$} c <{$} |
}
\hline
_n \backslash ^r & 0 & 1 & 2 & 3 & 4 & 5 & 6 & 7 \\
\hline
1 & S^0 & * & * & * & * & * & * & * \\
\hline
2 & \bigvee^3 S^0 & S^1 & * & * & * & * & * & * \\
\hline
3 & \bigvee^7 S^0 & \bigvee^5 S^1 & S^3 & * & * & * & * & * \\
\hline
4 & \bigvee^{15} S^0 & \bigvee^{17} S^1 & \bigvee^9 S^3 & S^7 & * & * & * & * \\
\hline
5 & \bigvee^{31} S^0 & \bigvee^{49} S^1 & \bigvee^{49} S^3 & \bigvee^{10} S^7 \vee S^4 & S^{15} & * & * & * \\
\hline
6 & \bigvee^{63} S^0 & \bigvee^{129} S^1 & \bigvee^{209} S^3 & \bigvee^{60} S^7 \vee \bigvee^{11} S^4 & \textcolor{blue}{\beta_{15}\ge 12} & S^{31} & * & * \\
\hline
7 & \bigvee^{127} S^0 & \bigvee^{321} S^1 & \bigvee^{769} S^3 & \bigvee^{280} S^7 \vee \bigvee^{71} S^4 & \textcolor{blue}{\beta_{15}\ge 84} & \textcolor{blue}{\beta_{31}\ge 14} & S^{63} & * \\
\hline
8 & \bigvee^{255} S^0 & \bigvee^{769} S^1 & \bigvee^{2561} S^3 & \bigvee^{1120} S^7 \vee \bigvee^{351} S^4 & \textcolor{blue}{\beta_{15}\ge 448} & \textcolor{blue}{\beta_{31}\ge 112} & \textcolor{blue}{\beta_{63}\ge 16} & S^{127} \\
\hline
9 & \bigvee^{511} S^0 & \bigvee^{1793} S^1 & \bigvee^{7937} S^3 & \bigvee^{4032} S^7 \vee \bigvee^{1471} S^4 & \textcolor{blue}{\beta_{15}\ge 2016} & \textcolor{blue}{\beta_{31}\ge 672} & \textcolor{blue}{\beta_{63}\ge 144} & \textcolor{blue}{\beta_{127}\ge 18} \\
\hline
10 & \bigvee^{1023} S^0 & \bigvee^{4097} S^1 & \bigvee^{23297} S^3 & \bigvee^{13440} S^7 \vee \bigvee^{5503} S^4 & \textcolor{blue}{\beta_{15}\ge 8064} & \textcolor{blue}{\beta_{31}\ge 3360} & \textcolor{blue}{\beta_{63}\ge 960} & \textcolor{blue}{\beta_{127}\ge 180} \\
\hline
11 & \bigvee^{2047} S^0 & \bigvee^{9217} S^1 & \bigvee^{65537} S^3 & \bigvee^{42240} S^7 \vee \bigvee^{18943} S^4 & \textcolor{blue}{\beta_{15}\ge 29568} & \textcolor{blue}{\beta_{31}\ge 14784} & \textcolor{blue}{\beta_{63}\ge 5280} & \textcolor{blue}{\beta_{127}\ge 1320} \\
\hline
12 & \bigvee^{4095
} S^0 & \bigvee^{20481} S^1 & \bigvee^{178177} S^3 & \bigvee^{126720} S^7 \vee \bigvee^{61183} S^4 & \textcolor{blue}{\beta_{15}\ge 101376} & \textcolor{blue}{\beta_{31}\ge 59136} & \textcolor{blue}{\beta_{63}\ge 25344} & \textcolor{blue}{\beta_{127}\ge 7920} \\
\hline
\end{tabular}
}
\caption{Black entries are the known homotopy types of $\vr{Q_n}{r}$; blue entries are sample novel lower bounds on the Betti numbers of $\vr{Q_n}{r}$ based on Theorem~\ref{ThmMain1}.
(See Table~\ref{table:homotopy-types2} for improved lower bounds for the $r=4$ column.)}
\label{table:homotopy-types}
\end{table}

In this paper, instead of focusing on a single value of $r$, we provide novel lower bounds on the ranks of homology groups of $\vr{Q_n}{r}$ for all values of $r$.
Some of these lower bounds are shown in blue in Table~\ref{table:homotopy-types}: 
using cross-polytopal generators, in Theorem~\ref{ThmMain1} we prove $\rank H_{2^r-1}( \vr{Q_{n}}{r}) \geq 2^{n-(r+1)}\binom{n}{r+1}$ (although we show in Section \ref{sec:geometric} that for $r\ge 2$ and $n > 2r$ this does not constitute the entire reduced homology in all dimensions).
This is the first result showing that the topology of $\vr{Q_n}{r}$ is nontrivial for \emph{all} values of $r\le n-1$.
Furthermore, we often show that $\vr{Q_n}{r}$ is far from being contractible, with the rank of $(2^r-1)$-dimensional homology tending to infinity exponentially fast as a function of $n$ (with $n$ increasing and with $r$ fixed). 

Our general strategy, which we refer to as \emph{homology propagation}, is as follows.
Let $q\ge 1$.
Suppose that one can show that the $q$-dimensional homology group $H_q(\vr{Q_{p}}{r})$ is nonzero (for example, using a homology computation on a computer, or alternatively a theoretical result such as the mentioned cross-polytopal elements or the geometric generators of Section \ref{sec:geometric}).
Then we provide lower bounds on the ranks of the homology groups $H_q(\vr{Q_{n}}{r})$ for all $n\ge p$.
In particular, in Theorem~\ref{ThmMain4} we prove that if $p\ge 1$ is the smallest integer for which $H_q(\vr{Q_{p}}{r})\neq 0$, then
\[\rank H_{q}(\vr{Q_{p}}{r}) \geq \sum_{i=p}^n 2^{i-p} \binom{i-1}{p-1} \cdot \rank H_{q}(\vr{Q_{p}}{r}).\]
Thus, a homology computation for a low-dimensional hypercube $Q_p$ has consequences for the homology of $\vr{Q_{n}}{r}$ for all $n\ge p$.
See Table~\ref{table:homotopy-types2} for some consequences of this result and of related results.

As we explain in Section~\ref{SubsComparison}, when $r\le 3$ our results are known to provide tight lower bounds on all Betti numbers of $\vr{Q_{n}}{r}$.
We take this as partial evidence that our novel results on the Betti numbers of $\vr{Q_{n}}{r}$ for $r\ge 4$ are likely to be good lower bounds, though we do not know how close they are to being tight as no upper bounds are known.
Indeed, the main ``upper bound'' we know of on the Betti numbers of $\vr{Q_{n}}{r}$ is a triviality result for 2-dimensional homology: Carlsson and Filippenko~\cite{carlsson2020persistent} prove that $H_2(\vr{Q_n}{r})=0$ for all $n$ and $r$.

For integers $r<r'$, we prove via a simple argument that the inclusion $\vr{Q_n}{r}\hookrightarrow\vr{Q_n}{r'}$ is nullhomotopic.
Therefore, there are no persistent homology bars of length longer than one, and all homological information about the filtration $\vr{Q_n}{\bullet}$ is determined by $\vr{Q_n}{r}$ for individual integer values of $r$.

Though we have stated our results for $Q_n=\{0,1\}^n$ equipped with the $\ell^1$ metric, we remark that these results hold for any $\ell^p$ metric with $1\le p < \infty$.
Indeed for $x,y\in Q_n$, the $i$-th coordinates of $x$ and $y$ differ by either $0$ or $1$ for each $1\le i\le n$, and hence $\vr{(Q_n,\ell^p)}{r}=\vr{(Q_n,\ell^1)}{r^p}$.
So, our results can be translated into any $\ell^p$ metric by a simple reparametrization of scale.

We expect that some of our work could be transferred over to provide results for \v{C}ech complexes of hypercube graphs, as studied in~\cite{adams2022v}, though we do not pursue that direction here.

We begin with some preliminaries in Section~\ref{sec:preliminaries}.
In Section~\ref{sec:contractions-ph} we review contractions, and we prove that $\vr{Q_n}{\bullet}$ has no persistent homology bars of length longer than one.
In Section~\ref{sec:maximal} we use cross-polytopal generators to prove $\rank H_{2^r-1}( \vr{Q_{n}}{r}) \geq 2^{n-(r+1)}\binom{n}{r+1}$.
We introduce concentrations in Section~\ref{sec:concentrations}, which we use to prove our more general forms of homology propagation in Section~\ref{sec:contractions-homology}.
In Section~\ref{sec:geometric} we prove the existence of novel lower-dimensional homology generators, and we conclude with some open questions in Section~\ref{sec:conclusion}.

\section{Preliminaries and geometry of hypercubes.}
\label{sec:preliminaries}

\subsection{Homology}

All homology groups will be considered with coefficients in $\mathbb{Z}$ or in a field.
The rank of a finitely generated abelian group is the cardinality of a maximal linearly independent subset.
We let $\beta_q$ denote the $q$-th \emph{Betti number} of a space, i.e., the rank of the $q$-dimensional homology group.

\subsection{Hypercubes}
Hypercubes are among the simplest examples of product spaces.

\begin{definition}
Given $n\in \{1,2,\ldots\}$, the \emph{hypercube graph} $Q_n$ is the metric space $\{0,1\}^n$, equipped with the $\ell^1$ metric.
In particular, the elements of the space are $n$-tuples $(a_1, a_2, \ldots, a_n)=(a_i)_{i\in [n]}$ with $a_i\in \{0,1\}$, and the $\ell^1$ distance is defined as
\[
d \big((a_1, a_2, \ldots, a_n),(b_1, b_2, \ldots, b_n)\big)= 
\sum_{i=1}^n |a_i-b_i|.
\]
\end{definition}
\noindent In other words, the distance between two $n$-tuples is the number of coordinates in which they differ.

For $a\in Q_n$, its antipodal point $\bar a$ is given as $\bar a = (1,1,\ldots, 1)-a$.
In particular, $\bar a$ is the furthest point in $Q_n$ from $a$ and thus shares no coordinate with $a$.
Observe that $d(a,\bar a)=n$.

\subsection{Vietoris--Rips complexes}
A Vietoris--Rips complex is a way to ``thicken'' a metric space, as we describe via the definitions below.

\begin{definition}
Given a metric space $X$ and a finite subset $A \subseteq X$, the \emph{diameter of $A$} is 
\[
\diam(A) = \max_{a,b\in A} d(a,b).
\]
The \emph{local diameter} of $A$ at a point $a\in A$ equals
\[
\locdiam(A,a) = \max_{b\in A} d(a,b).
\]
\end{definition}

\begin{definition}
Given $r \geq 0$ and a metric space $X$ the \emph{Vietoris--Rips complex} $\vr{X}{r}$ is the simplicial complex with vertex set $X$, and with a finite subset $\sigma \subseteq X$ being a simplex whenever $\diam (\sigma) \leq r$.
\end{definition}

This is the \emph{closed} Vietoris--Rips complex, since we are using the convention $\le$ instead of $<$.
But, since the metric spaces $Q_n$ are finite, all of our results have analogues if one instead considers the \emph{open} Vietoris--Rips complex that uses the $<$ convention.

In~\cite{adamaszek2022vietoris} it was proven that $\vr{Q_n}{2}$ is homotopy equivalent to a wedge sum of $3$-dimensional spheres:

\begin{theorem}[Theorem~1 of~\cite{adamaszek2022vietoris}]
\label{ThmAA} 
For $n\ge 3$, we have the homotopy equivalence
\[\vr{Q_n}{2} \simeq \bigvee_{c_n} S^3, \textrm{ where } c_n= \sum_{0 \leq j < i < n}(j+1)(2^{n-2}- 2^{i-1}).\]
\end{theorem}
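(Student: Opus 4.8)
The plan is to induct on $n$, with the base case $n=3$ handled directly: in $Q_3$ the only pairs at distance greater than $2$ are the four antipodal pairs, so every non-antipodal pair spans an edge and $\vr{Q_3}{2}$ is precisely the boundary of the $4$-dimensional cross-polytope, which is $S^3$; since the sum defining $c_n$ evaluates to $1$ at $n=3$, this matches. For the inductive step I would split the vertex set by the last coordinate, writing $Q_n = A \sqcup B$ with $A = Q_{n-1}\times\{0\}$ and $B = Q_{n-1}\times\{1\}$, each spanning a full subcomplex isomorphic to $\vr{Q_{n-1}}{2}$. The crucial geometric observation is that a cross pair $(a',0),(b',1)$ has distance at most $2$ in $Q_n$ if and only if the projections $a',b'$ have distance at most $1$ in $Q_{n-1}$; this makes the bridging simplices very restrictive and is the feature that should force the extra homology to concentrate in a single degree.

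Next I would realize $\vr{Q_n}{2}$ as a homotopy pushout (equivalently, run a Mayer--Vietoris argument) of the two copies of $\vr{Q_{n-1}}{2}$ glued along the midlevel slice $M$ of the piecewise-linear function $v\mapsto v_n$, whose cells encode exactly the bridging simplices above; here $f^{-1}[0,\tfrac12]$ and $f^{-1}[\tfrac12,1]$ collapse onto the two halves and meet along $M=f^{-1}(\tfrac12)$. By induction each copy $\vr{Q_{n-1}}{2}$ has reduced homology free and concentrated in degree $3$, of rank $c_{n-1}$. The heart of the argument is to compute $\redhom_*(M)$ together with the two structure maps $M\to\vr{Q_{n-1}}{2}$: I would show $\redhom_1(M)=0$ (so that Mayer--Vietoris forces $H_2(\vr{Q_n}{2})=0$, consistent with Carlsson--Filippenko), that the degree-$3$ part of $\redhom_*(M)$ injects into $H_3(\vr{Q_{n-1}}{2})^{\oplus 2}$ (killing $H_4$ and everything above), and that $\redhom_2(M)$ is exactly what produces the additional spheres. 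This yields $\rank H_3(\vr{Q_n}{2}) = 2c_{n-1} - \rank(\mathrm{im}\,\alpha) + \rank\redhom_2(M)$, where $\alpha$ is the degree-$3$ structure map, and the final bookkeeping step checks that this recursion reproduces the closed form $c_n = \sum_{0\le j<i<n}(j+1)(2^{n-2}-2^{i-1})$.

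To upgrade the homology computation to a genuine homotopy equivalence, I would extract simple connectivity from the same pushout via van Kampen's theorem: the two halves are simply connected (by induction, with base case $\vr{Q_2}{2}$ a full simplex, hence contractible) and $M$ is connected, so $\pi_1(\vr{Q_n}{2})=0$. Combined with reduced homology that is free and concentrated in degree $3$, Hurewicz and Whitehead then give a homotopy equivalence from $\bigvee_{c_n} S^3$ to $\vr{Q_n}{2}$, realized on explicit $3$-sphere generators (cross-polytopal and propagated ones). As an alternative to the pushout, one could package the entire argument in discrete Morse theory, constructing an acyclic matching with a single critical vertex and exactly $c_n$ critical $3$-cells and no others.

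I expect the main obstacle to be the analysis of the bridge $M$: determining its homotopy type, and especially pinning down the two structure maps into the copies of $\vr{Q_{n-1}}{2}$ precisely enough both to see that homology concentrates in degree $3$ and to extract the exact count $c_n$. This is where the combinatorics of which cross pairs and cross triples form bridging simplices enters in force, and it is the step most likely to require a secondary induction or an auxiliary decomposition of $M$ itself into a wedge of spheres. The weighted double sum for $c_n$ strongly suggests that $\redhom_2(M)$ decomposes as a wedge indexed by pairs of coordinates with multiplicities $(j+1)$ and $(2^{n-2}-2^{i-1})$, so matching the closed form will amount to identifying these spheres geometrically rather than merely verifying a numerical recursion.
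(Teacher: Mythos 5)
This statement is imported into the paper as a citation (Theorem~1 of~\cite{adamaszek2022vietoris}); the paper contains no proof of it, so your attempt has to stand on its own. Its skeleton is reasonable: the base case is correct ($\vr{Q_3}{2}$ is the boundary of the $4$-dimensional cross-polytope and $c_3=1$), the cross-pair observation $d\big((a',0),(b',1)\big)=d(a',b')+1$ is right, the fact that $f^{-1}[0,\tfrac12]$ and $f^{-1}[\tfrac12,1]$ deformation retract onto the two full subcomplexes is standard, and the final Hurewicz--Whitehead upgrade from ``simply connected with free homology concentrated in degree $3$'' to a wedge of $3$-spheres is sound. But what you have written is a plan, not a proof, and the part you defer is the entire mathematical content of the theorem.

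Concretely, there are three gaps. First, you never determine $\redhom_*(M)$ or the two structure maps $M\to\vr{Q_{n-1}}{2}$, and this is genuinely hard: the cells of the midlevel slice $M$ are products $\Delta^p\times\Delta^q$ indexed by bridging simplices, and bridging simplices can be large --- for example $\{(b',1)\}\cup\{(a',0) : d(a',b')\le 1\}$ is a bridging simplex with $n+1$ vertices, so $M$ has cells of dimension up to $n-1$; nothing in your outline controls the homology of this complex or produces the claimed wedge decomposition of $\redhom_2(M)$. Second, even granting $\redhom_1(M)=0$ and injectivity of the degree-$3$ structure map, your Mayer--Vietoris bookkeeping only kills $H_2$ and $H_4$: the claim that injectivity in degree $3$ kills ``everything above'' is false as stated, since vanishing of $H_q(\vr{Q_n}{2})$ for $q\ge 5$ requires $\redhom_{q-1}(M)=0$ for all $q-1\ge 4$, which you never address and which is nontrivial precisely because $M$ is high-dimensional. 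Third, the numerical step --- showing that $\rank\,\redhom_2(M)-\rank(\image\,\alpha)$ equals $c_n-2c_{n-1}$, i.e.\ that the recursion reproduces $c_n=c_{n-1}+2^{n-3}\binom{n}{3}$ --- is asserted but not begun, and it cannot be carried out before the first gap is filled. Until the homotopy type of $M$ and the structure maps are actually computed (or the argument is replaced by, say, an explicit acyclic matching as you suggest in passing), the induction does not close.
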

\noindent See~\cite{saleh2023vietoris} for some relationships between this result and generating functions.

In~\cite{feng2023homotopy}, it was proven that $\vr{Q_n}{3}$ is always homotopy equivalent to a wedge sum of $7$-spheres and $4$-spheres:

\begin{theorem}[Theorem~24 of~\cite{feng2023homotopy}]
\label{ThmZiqin}
For $n\ge 5$, we have the homotopy equivalence
\[\vr{Q_n}{3} \simeq \bigvee_{2^{n-4}\binom{n}{4}} S^7 \ \vee \bigvee_{\sum_{i=4}^{n-1} 2^{i-4} \binom{i}{4}} S^4.\]
\end{theorem}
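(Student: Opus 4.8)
The plan is to assume Shukla's concentration result \cite{shukla2022vietoris} (so that for $n\ge 5$ the reduced homology of $\vr{Q_n}{3}$ is nonzero only in degrees $4$ and $7$, and is torsion-free) and to split the argument into two essentially independent tasks: (A) computing the exact Betti numbers $\beta_4$ and $\beta_7$, and (B) upgrading this homological data to an honest homotopy equivalence with a wedge of $4$- and $7$-spheres. For (B) I would avoid obstruction theory entirely by realizing each homology generator as a genuine map from a sphere into $\vr{Q_n}{3}$, assembling these into a single map $F\colon \bigvee S^4 \vee \bigvee S^7 \to \vr{Q_n}{3}$, and then invoking Whitehead's theorem once $F$ is shown to induce an isomorphism on homology.

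First I would record that $\vr{Q_n}{3}$ is simply connected: every edge of length $2$ or $3$ is homotopic rel endpoints into the hypercube graph $\vr{Q_n}{1}$ (a length-$2$ edge $\{a,b\}$ fills against a geodesic triangle of diameter $2$, and a length-$3$ edge against a geodesic $4$-clique of diameter $3$), so $\pi_1(\vr{Q_n}{3})$ is a quotient of $\pi_1(\vr{Q_n}{1})$; the latter is generated by square $4$-cycles, each of which already bounds a disk in $\vr{Q_n}{2}$, hence $\pi_1=0$. The $7$-sphere generators are then honest subcomplexes: each coordinate $4$-subcube spans a copy of $\vr{Q_4}{3}$, which is the boundary of the $16$-vertex cross-polytope and thus homeomorphic to $S^7$. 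The $4$-sphere generators I would realize explicitly as the geometric generators of Section~\ref{sec:geometric}, so that $F$ is a genuine map; since both spaces are simply connected with free homology concentrated in the same two degrees, Whitehead's theorem then closes out (B) as soon as $F_*$ is known to be an isomorphism.

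The exact Betti numbers I would obtain by induction on $n$ via the facet decomposition $Q_n=Q_{n-1}\times\{0,1\}$, with base case $\vr{Q_4}{3}\cong S^7$. Writing $A,B\cong\vr{Q_{n-1}}{3}$ for the two facets, a simplex meeting both facets has diameter $\le 3$ exactly when its two projections have diameter $\le 3$ and its cross-distances obey a $\vr{Q_{n-1}}{2}$-type constraint; analyzing the long exact sequence (or Mayer--Vietoris sequence) of this decomposition should peel off the contributions of $A$ and $B$ from the interface. The $7$-dimensional count is driven by the recursion $c_n=2c_{n-1}+2^{n-4}\binom{n-1}{3}$, whose extra term counts the new $4$-subcubes using the $n$-th coordinate and whose closed form is $2^{n-4}\binom{n}{4}$; here the lower bound $\rank H_7\ge 2^{n-4}\binom{n}{4}$ is already supplied by Theorem~\ref{ThmMain1}, so the work is the matching upper bound coming out of the exact sequence. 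The $4$-dimensional count is governed by $b_n=b_{n-1}+2^{n-5}\binom{n-1}{4}$, whose extra term counts the new $5$-subcubes using the $n$-th coordinate (each contributing a single $S^4$, since $\rank H_4(\vr{Q_5}{3})=1$); its closed form $\sum_{i=4}^{n-1}2^{i-4}\binom{i}{4}$ matches the propagation lower bound of Theorem~\ref{ThmMain4}.

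The main obstacle is the interface analysis in task (A): unlike the facet subcomplexes, the mixed simplices do not form a product, and I expect the delicate point to be showing that the connecting maps in the long exact sequence have exactly the right rank so that the two recursions hold on the nose, in particular that $H_4$ grows by $b_{n-1}$ rather than $2b_{n-1}$, i.e. that one facet's $4$-spheres become homologically dependent on the other's through the interface. A secondary obstacle is the wedge upgrade in (B): the a priori possible attaching maps of $7$-cells to $\bigvee S^4$ lie in $\pi_6(\bigvee S^4)\cong\bigoplus \Z/2$ (generated by $\eta^2$) and are invisible to primary Steenrod operations, since $Sq^2$ out of degree $4$ lands in the vanishing group $H^6(\vr{Q_n}{3})$; realizing every generator as a genuine spherical map and invoking Whitehead's theorem is precisely what lets me bypass this secondary cohomology-operation computation.
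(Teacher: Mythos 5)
First, a framing point: the paper does not prove Theorem~\ref{ThmZiqin} at all; it is imported from Feng~\cite{feng2023homotopy} (Theorem~24 there), so the only ``proof'' here is the citation, and Feng's argument is not a Mayer--Vietoris induction of the kind you propose. Judged on its own terms, your plan has a genuine gap at precisely the step you call the ``main obstacle,'' and that step is the entire mathematical content of the theorem. The two facet subcomplexes $\vr{Q_{n-1}^0}{3}$ and $\vr{Q_{n-1}^1}{3}$ do not cover $\vr{Q_n}{3}$: every simplex with vertices in both facets lies in neither subcomplex, so the decomposition $Q_n=Q_{n-1}\times\{0,1\}$ does not by itself produce any long exact sequence. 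To get one you must choose covering subcomplexes (e.g.\ suitable combinatorial neighborhoods of the two facets), determine their homotopy types, and determine the homotopy type of their intersection. The interface is governed by the condition that cross pairs $x\in Q_{n-1}^0$, $y\in Q_{n-1}^1$ satisfy $d(\pi x,\pi y)\le 2$ while pairs inside a single facet may still be at distance $3$; this mixed-constraint complex is not $\vr{Q_{n-1}}{2}$, nor any complex whose homotopy type you or the cited literature have established. Consequently both recursions --- in particular the key claim that $\beta_4$ grows by $b_{n-1}$ rather than $2b_{n-1}$ --- are asserted, not proved. The halves you do have, $\rank H_7\ge 2^{n-4}\binom{n}{4}$ and $\rank H_4\ge\sum_{i=4}^{n-1}2^{i-4}\binom{i}{4}$, are exactly the easy direction supplied by Theorems~\ref{ThmMain1} and~\ref{ThmMain4} of this paper; the matching upper bounds are the hard direction and remain entirely open in your write-up.

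Second, even granting the Betti numbers, step (B) does not close as stated. Whitehead's theorem needs $F_*$ to be an isomorphism on \emph{integral} homology, but a family of spherical classes that is linearly independent and has cardinality equal to the rank generates, a priori, only a finite-index subgroup (compare $2\Z\subset\Z$). For the $S^7$ classes this is repairable by the cap-product/maximal-simplex argument of Theorem~\ref{ThmMain1}, which pairs the cross-polytopal cycles with dual cocycles in an identity matrix and hence exhibits a direct summand; but the propagated $S^4$ classes of Theorem~\ref{ThmMain4} are only shown to be independent, not to form a $\Z$-basis, and the torsion-freeness you attribute to Shukla~\cite{shukla2022vietoris} is not part of his statement as quoted in this paper. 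So you would additionally need either an integral basis argument for $H_4$ or a computation over enough field coefficients combined with universal coefficients before Whitehead's theorem applies.
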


\subsection{Embeddings of hypercubes}
\label{ssec:embeddings}

For $k$ a positive integer, let $[k]=\{1,2,\ldots, k\}$.
Given $p \in [n-1]$ there are many isometric copies of $Q_p$ in $Q_n$.
For any subset $S \subseteq [n]$ of cardinality $p$ we can isometrically embed $Q_p$ in $Q_n$, using set $S$ as its variable coordinates, and leaving the rest of the entries fixed.
In more detail, we define an isometric embedding $\iota_S^b \colon Q_p \hookrightarrow Q_n$ associated to a subset $S=\{s_1,s_2, \ldots, s_p\} \subseteq [n]$ of coordinates and an offset $(b_i)_{i\in [n]\setminus S} \in \{0,1\}^{n-|S|}$, maps $(a_i)_{i\in [p]}$ to $(a'_i)_{i\in [n]}$ with:
\begin{itemize}
 \item $a'_{s_i}=a_i$ for $i\in [p]$, and
 \item $a'_i=b_i$ otherwise.
\end{itemize}
Given a fixed set $S$, there are $2^{n-p}$ such embeddings $\iota_S$, each associated to a different offset $b$.
Let $\pi_S \colon Q_n \to Q_p$ be the map projecting onto the coordinates in $S$.
Then $\pi_s \circ \iota_S = id_{Q_p}$ for any map $\iota_S$ (i.e., for any choice of an offset $b$).
Given an offset $(b_i)_{i\in [n]\setminus S}$, let $Q_p^b$ denote the image of $\iota_S^b$ corresponding to the offset $b$, and let $\pi_S^b \colon Q_n \to Q_p^b$ be defined as $\iota_S^b \circ \pi_S$.
Given $B \subseteq Q_n$ its \emph{cubic hull} $\chull(B)$ is the smallest isometric copy of a cube (i.e., the image of $Q_{p'}$ via some map $\iota$) containing $B$.

For our purposes we will only consider isometric embeddings $Q_p \hookrightarrow Q_n$ (also denoted by $Q_p \leq Q_n$) that retain the order of coordinates, although any permutation of coordinates of $Q_p$ results in a non-constant isometry of $Q_p$ and thus a different isometric embedding into $Q_n$.
With this convention of retaining the coordinate order, there are $\binom{n}{p} 2^{n-p}$ isometric embeddings 
$\iota \colon Q_p \hookrightarrow Q_n$ and $\binom{n}{p}$ projections $\pi \colon Q_n \to Q_p$.

\section{Contractions and the persistent homology of hypercubes}
\label{sec:contractions-ph}

In this section we prove the following results.
First, fix the scale $r\ge 0$, and let $p\le n$.
An isometric embedding $Q_p \hookrightarrow Q_n$ gives an inclusion $\vr{Q_p}{r}\hookrightarrow \vr{Q_n}{r}$, which is injective on homology in all dimensions.
Alternatively, fix dimension $n$, and consider integer scale parameters $r < r'$.
The inclusion $\vr{Q_n}{r}\hookrightarrow \vr{Q_n}{r'}$ is nullhomotopic, and hence the filtration $\vr{Q_n}{\bullet}$ has no persistent homology bars of length longer than one.
These results follow from the properties of contractions, which we introduce now.


\subsection{Contractions}
\label{ssec:contractions}

A map $f \colon X \to A$ from a metric space $(X,d)$ onto a closed subspace $A \subseteq X$ is a \emph{contraction} if $f|_{A}=id_A$ and if $d(f(x),f(y)) \leq d(x,y)$ for all $x,y\in X$.

Our interest in contractions stems from the fact that if a contraction $X \to A$ exists, then the homology of a Vietoris--Rips complex of $A$ maps injectively into the homology of the corresponding Vietoris--Rips complex of $X$:

\begin{proposition}[\cite{virk2022contractions}]
\label{PropContrEmbed}
 If $f\colon X \to A$ is a contraction, then the embedding $A \hookrightarrow X$ induces injections on homology $H_q(\vr{A}{r})\to H_q(\vr{X}{r})$ for all integers $q\ge 0$ and scales $r\ge 0$.
\end{proposition}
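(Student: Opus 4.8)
The plan is to exhibit an explicit left inverse, on homology, to the map induced by the inclusion, by exploiting the functoriality of the Vietoris--Rips construction under distance-nonincreasing maps. The whole argument is formal once one observes that a contraction is $1$-Lipschitz.

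First I would record that any $1$-Lipschitz map induces a simplicial map between Vietoris--Rips complexes at each fixed scale. Concretely, if $\sigma=\{x_0,\dots,x_k\}$ is a simplex of $\vr{X}{r}$, then $\diam(\sigma)\le r$, and since $d(f(x_i),f(x_j))\le d(x_i,x_j)$ for all $i,j$ we get $\diam(f(\sigma))\le r$, so the image $f(\sigma)$ spans a simplex of $\vr{A}{r}$. (Here $f$ may identify distinct vertices, but that is permitted for a simplicial map.) Denote the resulting simplicial map again by $f\colon \vr{X}{r}\to \vr{A}{r}$. By exactly the same reasoning, the inclusion $\iota\colon A\hookrightarrow X$ is an isometric embedding, hence $1$-Lipschitz, and so induces the simplicial inclusion $\iota\colon\vr{A}{r}\hookrightarrow\vr{X}{r}$ appearing in the statement.

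The crux is then to compose these in the order $\vr{A}{r}\xrightarrow{\iota}\vr{X}{r}\xrightarrow{f}\vr{A}{r}$. On the level of spaces this composite is induced by $f\circ\iota=f|_A=\mathrm{id}_A$, so it is literally the identity simplicial map of $\vr{A}{r}$, not merely a map homotopic to it. Applying the homology functor $H_q(-)$ and using its functoriality yields $f_*\circ\iota_*=\mathrm{id}$ on $H_q(\vr{A}{r})$ for every integer $q\ge 0$ and scale $r\ge 0$. Thus $\iota_*\colon H_q(\vr{A}{r})\to H_q(\vr{X}{r})$ admits a left inverse and is therefore injective, which is the claim.

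I do not expect a genuine obstacle here; the argument is entirely soft. The only point deserving a moment of care is that $f$ need not be injective on vertices, but this is harmless precisely because $f$ restricts to the identity on $A$ \emph{on the nose}, so the composite is the identity simplicial map and the conclusion follows by strict functoriality rather than any homotopy-theoretic refinement. (This is the content of the cited \cite{virk2022contractions}; the sketch above is just to make the statement self-contained.)
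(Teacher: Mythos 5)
Your proof is correct: the retraction argument via functoriality of Vietoris--Rips complexes under $1$-Lipschitz maps, giving $f_*\circ\iota_*=\mathrm{id}$ and hence injectivity of $\iota_*$, is exactly the standard argument behind this result. The paper itself states this proposition without proof, citing \cite{virk2022contractions}, and your sketch is the same soft argument that reference relies on, so there is nothing to add.
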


We prove that the projections from a higher-dimensional cube to a lower-dimensional cube in Section~\ref{ssec:embeddings} are contractions:

\begin{lemma}
\label{Lemma1}
Given fixed $p\in[n-1]$, set $S\subseteq [n]$ of cardinality $p$, and offset $b$ as in Section~\ref{ssec:embeddings}, the following hold:
\begin{enumerate}
\item Maps $\pi_S$ and $\pi_S^b$ are contractions.
\item For each $x\in Q_p^b$ and $y\in Q_n$, we have 
$
d(x,y) = d(x, \pi_S^b(y)) + d(\pi_S^b(y),y)
$.
\item For each offset $b'$:
\begin{enumerate}
	\item For each $x,y\in Q_p^{b'}$, we have $d(x,y)=d(\pi_p^b(x),\pi_p^b(y))$.
	\item For each $x\in Q_p^{b'}$ and $y\notin Q_p^{b'}$, we have $d(x,y) -1 \geq d(\pi_p^b(x),\pi_p^b(y)) \geq d(x,y) - (n-p)$.
	\end{enumerate}
\end{enumerate}
\end{lemma}

\begin{proof}
For $x,y\in Q_n$, the distance $d(x,y)$ is the number of components in which $x$ and $y$ differ.
On the other hand, $d(\pi_S^b(x), \pi_S^b(y))$ is the number of components from $S$ in which $x$ and $y$ differ (and the same holds for map $\pi_S$ instead of $\pi_S^b$ as well).
Thus $d(x,y) \leq d(\pi_S^b(x), \pi_S^b(y))$ with:
\begin{itemize}
 \item $d(x,y) = d(\pi_S^b(x), \pi_S^b(y))$ if $x,y\in Q_p^{b'}$ as their coordinates outside $S$ agree, yielding (1) and (3)(a), and
 \item (3)(b) follows from the fact that for each $x\in Q_p^{b'}, y\notin Q_p^{b'}$, the number of coordinates outside of $S$ on which $x$ and $y$ disagree is at least $1$ (due to $x,y$ not being in the same $Q_p^*$) and at most $n-p$ (which is the cardinality of $[n]\setminus S$).
\end{itemize}
Item (2) follows from the observation that:
\begin{itemize}
 \item $d(x, \pi_S^b(y))$ is the number of components  from $S$ in which $x$ and $y$ differ, and
 \item $d(\pi_S^b(y),y)$ is the number of components  from $[n]\setminus S$ in which $x$ and $y$ differ, as $x\in Q_p^b$.
\end{itemize}
\end{proof}

Since each of the projections $\pi \colon Q_n \to Q_p$ is a contraction by Lemma~\ref{Lemma1}, Proposition~\ref{PropContrEmbed} then implies that each of the embeddings $Q_p \hookrightarrow Q_n$ induces an injective map on homology $H_q(\vr{Q_p}{r}) \to H_q(\vr{Q_n}{r})$ for all dimensions $q$.

\subsection{Persistent homology of hypercubes}
\label{ssec:ph}

The emphasis in modern topology is often on persistent homology arising from the Vietoris--Rips filtration.
However, in the setting of Vietoris--Rips complexes of hypercubes, persistent homology does not provide any more information beyond the homology groups at fixed scale parameters.
Indeed, the following proposition implies that for any integers $r < r'$, the inclusion $\vr{Q_n}{r} \hookrightarrow \vr{Q_n}{r+1}$ induces a map that is trivial on homology.

\begin{proposition}
For any positive integers $n$ and $r$, the natural inclusion $\vr{Q_n}{r} \hookrightarrow \vr{Q_n}{r+1}$ is homotopically trivial.
\end{proposition}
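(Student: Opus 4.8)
The plan is to realize the inclusion as the first term of a finite chain of simplicial maps ending at a constant map, with consecutive maps contiguous, and then invoke the fact that contiguous simplicial maps have homotopic geometric realizations. First I would note why the naive approach fails: one would like to cone off $\vr{Q_n}{r}$ to a single apex vertex $v$ inside $\vr{Q_n}{r+1}$, but this requires $d(v,a)\le r+1$ for \emph{every} $a\in Q_n$, which is impossible once $n>r+1$ since $Q_n$ has diameter $n$. The remedy is to contract to a point in $n$ small steps, each using exactly one extra unit of scale.

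Concretely, for $0\le k\le n$ let $f_k\colon Q_n\to Q_n$ be the map that resets the first $k$ coordinates to $0$ and leaves the remaining coordinates unchanged, so that $f_0=\mathrm{id}$ and $f_n$ is the constant map onto the origin $\mathbf 0$. Zeroing a fixed block of coordinates can only decrease the number of coordinates in which two points differ, so each $f_k$ is a contraction; by the reasoning of Lemma~\ref{Lemma1} it is therefore a simplicial map $\vr{Q_n}{r}\to\vr{Q_n}{r}$, which I then regard as a map into $\vr{Q_n}{r+1}$ via the inclusion.

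The crux is to verify that $f_{k-1}$ and $f_k$ are contiguous as maps into $\vr{Q_n}{r+1}$, i.e.\ that for every simplex $\sigma$ of $\vr{Q_n}{r}$ the set $f_{k-1}(\sigma)\cup f_k(\sigma)$ has diameter at most $r+1$. The two maps agree except on coordinate $k$, where $f_{k-1}$ keeps the original bit and $f_k$ overwrites it with $0$. Distances within $f_{k-1}(\sigma)$ and within $f_k(\sigma)$ are at most $r$ because both maps are contractions, so only the mixed pairs need checking, and for $a,b\in\sigma$ a direct computation gives
\[
d\big(f_{k-1}(a),f_k(b)\big)=a_k+\sum_{i=k+1}^{n}|a_i-b_i|\le 1+d(a,b)\le r+1 .
\]
Hence $f_{k-1}(\sigma)\cup f_k(\sigma)$ is a simplex of $\vr{Q_n}{r+1}$, so $f_{k-1}$ and $f_k$ are contiguous and their realizations are homotopic. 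Chaining $f_0\simeq f_1\simeq\cdots\simeq f_n$ shows the inclusion $f_0$ is homotopic in $\vr{Q_n}{r+1}$ to the constant map $f_n$, proving nullhomotopy.

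I expect the main obstacle to be conceptual rather than computational: recognizing that the single extra unit of scale is exactly what is needed and suffices. The displayed bound is tight — at scale $r$ the quantity $a_k+\sum_{i>k}|a_i-b_i|$ can equal $r+1$ — so this chain of homotopies genuinely lives only in the enlarged complex, which is consistent with the fact that the identity on $\vr{Q_n}{r}$ is generally not nullhomotopic. Once the coordinate-zeroing sequence is set up, each step costs only one unit of diameter, and the estimate above is the only nontrivial verification.
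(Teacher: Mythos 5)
Your proof is correct and is essentially the same as the paper's: the paper likewise contracts one coordinate at a time (via the projections $\pi_{[n-1]}, \pi_{[n-2]}, \ldots, \pi_{\{1\}}$) and verifies contiguity of consecutive maps in $\vr{Q_n}{r+1}$ using exactly your one-extra-unit-of-scale estimate. The only cosmetic difference is that you run the chain all the way to the constant map at the origin, whereas the paper stops at $\pi_{\{1\}}$ and invokes the contractibility of $\vr{Q_1}{r}$.
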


\begin{proof}
We first claim that the inclusion $\vr{Q_n}{r} \hookrightarrow \vr{Q_n}{r+1}$ is homotopic to the projection $\pi_{[n-1]}\colon \vr{Q_n}{r} \to \vr{Q_{n-1}}{r}$ in $ \vr{Q_n}{r+1}$.
In order to prove the claim we will show that the two maps are contiguous in $\vr{Q_n}{r+1}$ (i.e., for each simplex $\sigma \in \vr{Q_n}{r}$ the union $\sigma \cup \pi_{[n-1]}(\sigma)$ is contained in a simplex of $\vr{Q_n}{r+1}$), which implies that the two maps are homotopic.

Let $\sigma \in \vr{Q_n}{r}$.
By definition $\diam(\sigma)\leq r$.
As $\pi_{[n-1]}(\sigma)$ is obtained by dropping the final coordinate we also have $\diam(\pi_{[n-1]}(\sigma))\leq r$.
Taking $x\in \sigma$ and $y\in \pi_{[n-1]}(\sigma)$, i.e.\ $y=\pi_{[n-1]}(y')$ for some $y'\in\sigma$, we see that
\[
d(x,y)\leq d(x,y')+d(y',y) \leq r + 1
\]
as $d(y,y')\leq 1$.
This $\sigma \cup \pi_{[n-1]}(\sigma) \in \vr{Q_n}{r+1}$, and the claim is proved.

We proceed inductively, proving that each projection $\pi_{[k]}\colon \vr{Q_n}{r} \to \vr{Q_k}{r}$ is homotopic to the projection $\pi_{[k-1]}\colon \vr{Q_n}{r} \to \vr{Q_{k-1}}{r}$ in $\vr{Q_n}{r+1}$, by the same argument as above.
As a result, the embedding $\vr{Q_n}{r} \hookrightarrow \vr{Q_n}{r+1}$ is homotopic to the projection $\pi_{\{1\}}\colon \vr{Q_n}{r} \to \vr{Q_1}{r}$.
Since $\vr{Q_1}{r}$ is clearly contractible, this completes the proof.
\end{proof}

\section{Homology bounds via cross-polytopes and maximal simplices}
\label{sec:maximal}

Fix a scale $r\ge 2$, and consider an isometric embedding $\iota \colon Q_{r+1} \hookrightarrow Q_n$ for $n\ge r+1$.
The aim of this section is to prove not only that the induced map $\vr{Q_{r+1}}{r}\hookrightarrow \vr{Q_n;r}$ is injective on $(2^r-1)$-dimensional homology, but also that different (ordered) embeddings $\iota$ produce independent homology generators.
Let us explain this in detail.

We first observe that $\vr{Q_{r+1}}{r}$ is homeorphic to a $(2^r-1)$-dimensional sphere, i.e.\ $\vr{Q_{r+1}}{r} \cong S^{2^r-1}$.
The reason is that each vertex $x \in Q_{r+1}$ is connected by an edge in $\vr{Q_{r+1}}{r}$ to every vertex of $Q_{r+1}$ \emph{except} for $\bar x$, the antipodal vertex.
Therefore, after taking the clique complex of this set of edges, we see that $\vr{Q_{r+1}}{r}$ is isomorphic (as simplicial complexes) to the boundary of the cross-polytope with $2^{r+1}$ vertices.
This cross-polytope is a $2^r$-dimensional ball in $2^r$-dimensional Euclidean space, and therefore its boundary is a sphere of dimension $2^r-1$.
In particular, $\rank H_{2^r-1}( \vr{Q_{r+1}}{r})=1$.

Since $\vr{Q_{r+1}}{r}$ is the boundary of a cross-polytope, there is a convenient $(2^r -1)$-dimensional cycle $\gamma$ generating $H_{2^r-1}( \vr{Q_{r+1}}{r})$.
Define the set of maximal antipode-free simplices as 
\[
\nicea_r = \{Y \subseteq Q_{r+1}  \mid x \in Y \Leftrightarrow \bar x \notin Y\}.
\]
The cycle $\gamma$ is defined as the sum of appropriately oriented elements of $\nicea_r$.
The space $Q_{r+1}$ consists of $2^{r+1}$ points, which can be partitioned into $2^r$ pairs of mutually antipodal points.
If a subset of $Q_{r+1}$ contains exactly one point from each such pair, it is of cardinality $2^r$.
Thus $\nicea_r$ consists of sets of cardinality $2^r$.
Given $x\in Q_{r+1}$, the only element of $Q_{r+1}$ which disagrees with $x$ on all $r+1$ coordinates is $\bar x$.
As a result each element of $\nicea_r$ is  of diameter at most $r$ and thus a simplex of $ \vr{Q_{r+1}}{r}$.
Observe also that any element of $\nicea_r$ is a maximal simplex of $ \vr{Q_{r+1}}{r}$: adding any point to such a simplex would mean the presence of an antipodal pair, and so the diameter would thus grow to $r+1$.

As explained above, the embeddings $\iota \colon Q_{r+1} \hookrightarrow Q_n$ induce injections on homology by Lemma~\ref{Lemma1} and Proposition~\ref{PropContrEmbed}.
The fact that these embeddings give independent homology generators is formalized in the following statement, which is also the main result of this section.

\begin{theorem}
\label{ThmMain1}
For $r \geq 2$, 
\[
\rank H_{2^r-1}( \vr{Q_{n}}{r}) \geq 2^{n-(r+1)}\binom{n}{r+1}.
\]
\end{theorem}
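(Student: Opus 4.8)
The plan is to read off the claimed quantity as the number of order-preserving isometric embeddings $\iota_S^b\colon Q_{r+1}\hookrightarrow Q_n$, of which there are exactly $\binom{n}{r+1}2^{n-(r+1)}$ by Section~\ref{ssec:embeddings}, and to prove that the homology classes $(\iota_S^b)_*[\gamma]\in H_{2^r-1}(\vr{Q_n}{r})$ they produce are linearly independent. Each class is nonzero by Lemma~\ref{Lemma1} and Proposition~\ref{PropContrEmbed}, but injectivity of one embedding at a time says nothing about \emph{joint} independence; the whole content is to separate the $\binom{n}{r+1}$ choices of coordinate set $S$ \emph{and} the $2^{n-(r+1)}$ choices of offset $b$. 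It is worth noting first that the projections already separate different $S$: for $S'\neq S$ the composite $\pi_{S'}\circ\iota_S^b$ depends on its input only through the $|S\cap S'|\le r$ coordinates of $S\cap S'$, hence factors through $\vr{Q_{|S\cap S'|}}{r}$, which is a contractible full simplex, and so kills $(\iota_S^b)_*[\gamma]$, while for $S'=S$ it is the identity. Crucially, however, \emph{no} projection detects the offset: for fixed $S$ the contraction $\pi_S^{b_0}$ satisfies $\pi_S^{b_0}\circ\iota_S^b=\iota_S^{b_0}$, so it sends every $(\iota_S^b)_*[\gamma]$ to the same generator regardless of $b$. Separating offsets is therefore the crux, and it is exactly here that maximal simplices must enter.

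The detection device is to attach to each embedding one distinguished top-dimensional simplex. I would first establish the following combinatorial fact: for every $r\ge 2$ there is a set $Y^\ast\in\nicea_r$ such that (i) $\chull(Y^\ast)=Q_{r+1}$, and (ii) for every $p\in Q_{r+1}$ there is some $y\in Y^\ast$ with $d(y,p)\ge r$. Since $d(y,p)\ge r$ means $y$ and $\bar p$ differ in at most one coordinate, condition (ii) is equivalent to $Y^\ast$ being a dominating set of the graph $Q_{r+1}$. Given such a $Y^\ast$, set $\sigma_{S,b}:=\iota_S^b(Y^\ast)$. The role of (ii) is that $\sigma_{S,b}$ is a \emph{maximal} simplex of $\vr{Q_n}{r}$ for every $n\ge r+1$: any candidate vertex $v\in Q_{r+1}^b$ equals $\iota_S^b(p)$ for $p\notin Y^\ast$, whose antipode lies in $Y^\ast$, so the diameter jumps to $r+1$; and for $v\notin Q_{r+1}^b$, writing $m=d(v,\pi_S^b(v))\ge 1$, Lemma~\ref{Lemma1}(2) gives $d(\iota_S^b(y),v)=d(y,\pi_S(v))+m$, whence (ii) applied to $p=\pi_S(v)$ forces $d(\iota_S^b(y),v)\ge r+1$ for some $y$. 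The role of (i) is bookkeeping: $\chull(\sigma_{S,b})=Q_{r+1}^b$, and $(S,b)\mapsto Q_{r+1}^b$ is injective, so $\sigma_{S,b}$ determines $(S,b)$ and appears with coefficient $\pm1$ in the pushed cycle $(\iota_S^b)_\#\gamma$ but in no other $(\iota_{S'}^{b'})_\#\gamma$.

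The independence argument is then purely at chain level. Suppose $\sum_{S,b}c_{S,b}\,(\iota_S^b)_\#\gamma=\partial w$ for some $2^r$-chain $w$, and fix a pair $(S_0,b_0)$. Reading off the coefficient of $\sigma_{S_0,b_0}$ on both sides: on the left it equals $\pm c_{S_0,b_0}$ by the uniqueness above, while on the right it is $0$, since a maximal simplex is a face of no higher simplex and hence occurs in no boundary. Thus $c_{S_0,b_0}=0$ for all $(S_0,b_0)$, the classes $(\iota_S^b)_*[\gamma]$ are linearly independent, and $\rank H_{2^r-1}(\vr{Q_n}{r})\ge\binom{n}{r+1}2^{n-(r+1)}$.

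The main obstacle is the combinatorial fact above, and specifically condition (ii): producing a transversal of the antipodal pairs that is simultaneously a dominating set of full cubic hull, i.e.\ a cross-polytopal simplex that admits no extension in any $Q_n$. Condition (i) is easy to arrange, and for $r$ even the set of even-weight vectors is such a $Y^\ast$ (it is a transversal since $r+1$ is odd, it clearly spans all coordinates, and every vertex is within distance one of an even-weight vertex); for small $r$ one checks an explicit choice by hand, e.g.\ $Y^\ast=\{000,110,101,011\}\in\nicea_2$. The part needing genuine work is a uniform construction valid for all $r$ (the naive half-cube fails (i), and perturbing it tends to break domination at an isolated vertex), or alternatively a counting argument guaranteeing existence. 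I would also verify the small bookkeeping points used implicitly: that every $Y\in\nicea_r$ indeed has diameter at most $r$, and that $\gamma$ carries each maximal simplex with a unit coefficient, so that ``coefficient $\pm1$'' is literally correct.
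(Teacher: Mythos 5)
Your proposal follows the paper's own proof almost step for step: the detector simplices $\sigma_{S,b}$, maximal in $\vr{Q_n}{r}$ and with full cubic hull, are exactly the paper's construction (Proposition~\ref{PropReduction} together with Lemma~\ref{LemmaREvenAndOdd}), and your chain-level reading of the coefficient of a maximal simplex is precisely the direct argument the paper records in the remark after Proposition~\ref{PropCap} (the paper's stated proof uses the cap product with the dual cocycle $\omega_\sigma$, a cosmetic difference). Your domination condition (ii) is, for a transversal $Y^\ast\in\nicea_r$, equivalent to the paper's condition $\locdiam(Y^\ast,y)=r$ for all $y\in Y^\ast$: for $p\notin Y^\ast$ domination is automatic because $\bar p\in Y^\ast$ and $d(p,\bar p)=r+1$, while for $p\in Y^\ast$ it says exactly that some point of $Y^\ast$ realizes distance $r$ from $p$. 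Your unified treatment of points outside the subcube via Lemma~\ref{Lemma1}(2) is the same computation as the paper's two-case analysis, and your cubic-hull argument for why $\sigma_{S,b}$ cannot occur in another pushed cycle matches step (4) of the paper's proof.

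The genuine gap is the one you flag yourself: you construct $Y^\ast$ only for even $r$. When $r$ is odd, $r+1$ is even, so antipodal points of $Q_{r+1}$ have weights of equal parity; the even-weight set then contains entire antipodal pairs and is not a transversal at all, and your proposal offers no replacement, so as written the theorem is established only for half the values of $r$. The missing step is small, and the paper's Lemma~\ref{LemmaREvenAndOdd} fills it with a product construction you were one move away from: for odd $r\ge 3$, let $\tau\subseteq Q_r$ be the even-weight set (a legitimate choice one dimension down, since $r$ is odd and parity flips under the antipodal map of $Q_r$), and take $Y^\ast=\tau\times\{0,1\}\subseteq Q_{r+1}$. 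This is a transversal because the antipode of $(y,i)$ is $(\bar y,1-i)$ and $y\in\tau$ iff $\bar y\notin\tau$; it is dominating because $\tau$ dominates $Q_r$ within each of the two slices $Q_r\times\{i\}$; and $\chull(Y^\ast)=Q_{r+1}$ because $\chull(\tau)=Q_r$ and the last coordinate takes both values. With this single addition (and the routine checks you already listed: antipode-free sets have diameter at most $r$, and $\gamma$ carries each element of $\nicea_r$ with unit coefficient), your argument is complete and coincides with the paper's proof of Theorem~\ref{ThmMain1}.
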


The proof will be provided at the conclusion of the section.
Recall that $2^{n-(r+1)}\binom{n}{r+1}$ is the number of different (ordered) embeddings $\iota \colon Q_{r+1} \hookrightarrow Q_n$.
We will use maximal simplices and the pairing between homology and cohomology in order to prove that these $2^{n-(r+1)}\binom{n}{r+1}$ different embeddings provide independent cross-polytopal generators for homology.

\begin{proposition}
\label{PropCap}
Suppose $K$ is a simplicial complex and $\sigma$ is a maximal simplex of dimension $p$ in $K$.
If there is a $p$-cycle $\alpha$ in $K$ in which $\sigma$ appears with a non-trivial coefficient $\lambda$, then any representative $p$-cycle of $[\alpha]$ also contains $\sigma$ with the same coefficient $\lambda$.
\end{proposition}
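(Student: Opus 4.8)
The plan is to exploit the single structural fact available, namely that a maximal $p$-simplex is a face of no $(p+1)$-simplex, and to package it as a cohomological pairing (matching the ``pairing between homology and cohomology'' advertised for the proof of Theorem~\ref{ThmMain1}). First I would introduce the dual cochain $\sigma^* \in C^p(K)$ defined by $\sigma^*(\tau') = 1$ when $\tau' = \sigma$ and $\sigma^*(\tau') = 0$ for every other oriented $p$-simplex $\tau'$. The entire proposition then reduces to the assertion that $\sigma^*$ is a cocycle, because the evaluation of a cocycle on a cycle depends only on the homology class of that cycle.

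To verify $\delta \sigma^* = 0$, I would evaluate the coboundary on an arbitrary $(p+1)$-simplex $\tau$: by definition $(\delta \sigma^*)(\tau) = \sigma^*(\partial \tau)$, which is precisely the coefficient with which $\sigma$ appears in $\partial \tau$. Since $\sigma$ is a maximal simplex, it is not a face of $\tau$, so this coefficient vanishes. As $\tau$ was arbitrary among $(p+1)$-simplices, $\delta \sigma^* = 0$, so $\sigma^*$ represents a class in $H^p(K)$.

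Consequently the evaluation pairing $\langle [\sigma^*], [\,\cdot\,] \rangle$ between $H^p(K)$ and $H_p(K)$ is well defined, and hence $\langle \sigma^*, z \rangle$ takes the same value on every $p$-cycle $z$ lying in a fixed homology class. Because $\langle \sigma^*, z \rangle$ is by construction exactly the coefficient of $\sigma$ in $z$, and this coefficient equals $\lambda$ when $z = \alpha$, it follows that every representative $p$-cycle of $[\alpha]$ contains $\sigma$ with coefficient $\lambda$.

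An equivalent and perhaps more transparent phrasing avoids cohomology entirely: any representative $\alpha'$ of $[\alpha]$ can be written as $\alpha' = \alpha + \partial \beta$ for some $(p+1)$-chain $\beta$, and since no $(p+1)$-simplex admits $\sigma$ as a face, $\sigma$ does not appear in $\partial \beta$, whence its coefficient in $\alpha'$ equals its coefficient in $\alpha$. I do not anticipate a genuine obstacle here—the whole argument is carried by the maximality of $\sigma$, which forces $\sigma \notin \partial \beta$ for all $(p+1)$-chains $\beta$. The only point meriting care is to keep the reasoning valid over an arbitrary field or over $\Z$, as the surrounding rank statements require; both formulations above are coefficient-agnostic and so present no difficulty.
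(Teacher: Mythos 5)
Your proof is correct and takes essentially the same route as the paper: you form the cochain $\sigma^*$ dual to the maximal simplex (the paper's $\omega_\sigma$), observe it is a cocycle precisely because $\sigma$ is a face of no $(p+1)$-simplex, and then pair it with the homology class --- the paper phrases this pairing as a cap product, you as the Kronecker/evaluation pairing, which agree in this setting. Your alternative boundary argument ($\alpha'=\alpha+\partial\beta$ with $\sigma$ absent from $\partial\beta$) is exactly the direct proof the paper records in the remark immediately following the proposition.
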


\begin{proof}
As $\sigma$ is maximal, the $p$-cochain mapping $\sigma$ to $1$ and all other $p$-simplices to $0$ is a $p$-cocycle denoted by $\omega_\sigma$.
Utilizing the cap product we see that for each representative $\alpha'$ of $[\alpha]$, the cap product $[\omega_\sigma] \frown [\alpha'] = \lambda$ is the coefficient of $\sigma$ in $\alpha'$.
\end{proof}

\begin{remark}
Proposition~\ref{PropCap} could also be proved directly.
If $\alpha$ and $\alpha'$ are homologous $p$-cycles, then their difference is a boundary of a $(p+1)$-chain.
The later cannot contain $\sigma$ since $\sigma$ is maximal; hence the coefficients of $\sigma$ in $\alpha$ and in $\alpha'$ coincide.
 
We emphasize the cohomological proof because we will use the cochain $\omega_\sigma$ again.
\end{remark}

We next focus on the construction of maximal simplices of $ \vr{Q_{r+1}}{r}$ which are furthermore also maximal simplices in $\vr{Q_{n}}{r}$.
The following is a simple criterion identifying such a simplex as a maximal simplex in $\vr{Q_{n}}{r}$; see Figure~\ref{fig:maximalSimplices}.
(We recall that the \emph{local diameter} of $\sigma\subseteq Q_n$ at a point $w\in \sigma$ is defined as $\locdiam(\sigma,w) = \max_{z\in \sigma} d(w,z)$.)

\begin{figure}[htb]
\centering
\includegraphics[width=3in]{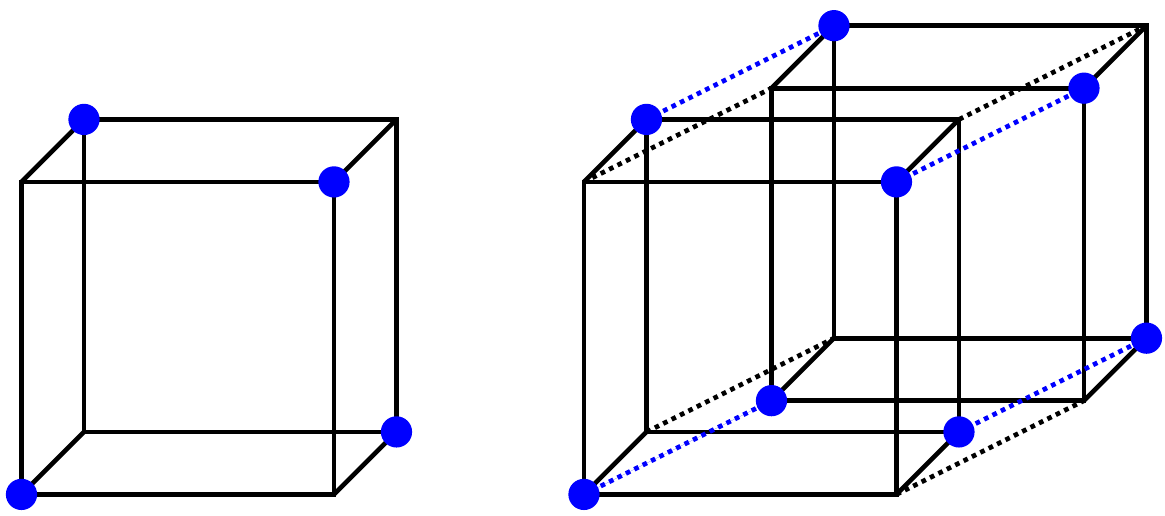}
\caption{\emph{(Left)} Subcube $Q_3$ with a maximal simplex $\sigma\in\vr{Q_3}{2}$ drawn in blue, illustrating Proposition~\ref{PropReduction}, and also Lemma~\ref{LemmaREvenAndOdd} when $r$ is even.
An inclusion of $\sigma$ in $Q_4$ also gives a maximal simplex $\iota_S^b(\sigma)\in\vr{Q_4}{2}$.
\emph{(Right)} Subcube $Q_4$ with a maximal simplex $\sigma\times\{0,1\}\in\vr{Q_4}{3}$ drawn in blue, illustrating Lemma~\ref{LemmaREvenAndOdd} when $r$ is odd.
}
\label{fig:maximalSimplices}
\end{figure}

\begin{proposition}
\label{PropReduction}
Let $n\ge r+1$, let $S \subseteq [n]$, and let $b$ be an associated offset.
Let $\sigma \subseteq Q_{r+1}^b$, and suppose $\sigma \in \nicea_r$ as a subset of $Q_{r+1}$.
If $\locdiam(\sigma,w) = r$ for all $w \in \sigma$, then $\sigma$ is a maximal simplex in $\vr{Q_{n}}{r}$.
\end{proposition}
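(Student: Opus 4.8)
The plan is to establish directly the two requirements of maximality. That $\sigma$ is a simplex at all is immediate: since $\sigma\in\nicea_r$ when regarded inside $Q_{r+1}$, it has diameter at most $r$ there (as shown above), and because $Q_{r+1}^b$ is an isometric copy of $Q_{r+1}$ inside $Q_n$, we get $\diam(\sigma)\le r$ in $Q_n$ as well. The real content is maximality. Since $\diam(\sigma)\le r$ already, the set $\sigma\cup\{z\}$ is a simplex precisely when $d(z,w)\le r$ for all $w\in\sigma$; hence $\sigma$ is maximal if and only if for every $z\in Q_n\setminus\sigma$ there exists a vertex $w\in\sigma$ with $d(z,w)\ge r+1$. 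I will produce such a $w$ for each $z$.

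The key tool will be the additive distance decomposition of Lemma~\ref{Lemma1}(2). Writing $\pi=\pi_S^b$ for the projection onto $Q_{r+1}^b$, set $z'=\pi(z)$ and $k=d(z,z')$, so that $k$ counts the coordinates outside $S$ on which $z$ departs from the offset $b$. Since every $w\in\sigma$ lies in $Q_{r+1}^b$, Lemma~\ref{Lemma1}(2) gives $d(z,w)=d(z',w)+k$. The argument then splits on whether $z'\in\sigma$.

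If $z'\notin\sigma$, then viewing $z'$ as a point of $Q_{r+1}$, the defining property of $\nicea_r$ forces the antipode $\overline{z'}$ to lie in $\sigma$; since $d(z',\overline{z'})=r+1$, choosing $w=\overline{z'}$ yields $d(z,w)=(r+1)+k\ge r+1$. If instead $z'\in\sigma$, then $z\ne z'$ (as $z\notin\sigma$), so $k\ge 1$; here I invoke the hypothesis $\locdiam(\sigma,z')=r$ to produce $w\in\sigma$ with $d(z',w)=r$, whence $d(z,w)=r+k\ge r+1$. In both cases the required $w$ exists, which completes the proof.

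The step I expect to carry the weight of the argument is the second case, together with the recognition that the local-diameter hypothesis is exactly what is needed rather than an incidental assumption: when the new point $z$ projects into $\sigma$, the only way to reach distance $r+1$ is to spend the single "extra" coordinate $k\ge 1$ on top of an internal distance already equal to $r$, which is precisely what $\locdiam(\sigma,z')=r$ guarantees. It is worth emphasizing that without this hypothesis the statement fails—an element of $\nicea_r$ all of whose local diameters are strictly below $r$ admits genuine extensions in $Q_n$ using a coordinate outside $S$—so the care lies in applying the hypothesis at the right place rather than in any delicate estimate.
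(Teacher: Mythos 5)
Your proof is correct and follows essentially the same route as the paper's: both split on whether the projection $\pi_S^b(z)$ lies in $\sigma$, use the additive decomposition of Lemma~\ref{Lemma1}(2), and then reach distance $r+1$ via the antipode $\overline{\pi_S^b(z)}$ in the first case and via the local-diameter hypothesis plus $d(z,\pi_S^b(z))\ge 1$ in the second. Your added remarks (the explicit check that $\sigma$ is a simplex, and the observation that the local-diameter hypothesis is indispensable) are correct but not substantively different from the paper's argument.
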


\begin{proof}
Assume a point $x\in Q_n\setminus \sigma$ is added to $\sigma$.
We will show that this increases the diameter of $\sigma$ beyond $r$, by repeatedly using Lemma~\ref{Lemma1}.
\begin{itemize}
\item If $\pi_S^b(x) \notin \sigma$ then $\overline{\pi_S^b(x)} \in \sigma$ and thus 
\[
d(x,\overline{\pi_S^b(x)} ) = d(x,\pi_S^b(x)) + d(\pi_S^b(x),\overline{\pi_S^b(x)}) \geq 0 + (r+1) =r+1.
\]
\item If $\pi_S^b(x) \in \sigma$ then $d(x,\pi_S^b(x)) \geq 1$, and also the local diameter assumption implies there exists $y\in \sigma$ with $d(\pi_S^b(x),y)=r$.
Thus 
\[
d(x,y ) = d(x,\pi_S^b(x)) + d(\pi_S^b(x),y) \geq 1 + r.
\]
\end{itemize}
Hence $\sigma$ is maximal in $\vr{Q_n}{r}$.
\end{proof}

We now construct maximal simplices $\sigma$ in $\vr{Q_{r+1}}{r}$ that, by Proposition~\ref{PropReduction}, will remain maximal in $\vr{Q
_n}{r}$.
We recall that the \emph{cubic hull} $\chull(\sigma)$ is the smallest isometric copy of a cube containing $\sigma$.
That the convex hull of $\sigma$ is all of $Q_{r+1}$ will later be used to give the independence of homology generators in the proof of Theorem~\ref{ThmMain1}.

\begin{lemma}
\label{LemmaREvenAndOdd}
If $r \geq  2$, then there exists a maximal simplex $\sigma \subseteq Q_{r+1}$ from $\nicea_r$ with $\locdiam(\sigma,y)=r$ for all $y \in \sigma$, and with $\chull(\sigma)=Q_{r+1}$.
\end{lemma}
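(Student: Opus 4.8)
The plan is to split into the cases $r$ even and $r$ odd, give an explicit construction in each, and then verify the three required properties: membership in $\nicea_r$ (equivalently, antipode-freeness together with the cardinality $2^r$), local diameter exactly $r$ at every vertex, and full cubic hull $\chull(\sigma)=Q_{r+1}$.

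First I would treat the case $r$ even. Here I would take $\sigma$ to be the set of all odd-weight vertices of $Q_{r+1}$, i.e.\ those $x$ with $\mathrm{wt}(x):=\sum_{i=1}^{r+1} x_i$ odd. Since the antipode $\bar x$ has weight $(r+1)-\mathrm{wt}(x)$ and $r+1$ is odd, $\bar x$ always has the opposite parity, so $\sigma$ contains no antipodal pair; as $\sigma$ has exactly $2^{(r+1)-1}=2^r$ elements, it picks exactly one from each of the $2^r$ antipodal pairs, so $\sigma\in\nicea_r$. The cubic hull is all of $Q_{r+1}$ because every coordinate takes both values on odd-weight vertices (for instance, $e_i$ and $e_j$ with $j\neq i$ are both odd-weight and differ in coordinate $i$, using $r+1\ge 3$). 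The crucial point is the local diameter: given $x\in\sigma$, flipping any $r$ of its $r+1$ coordinates yields a vertex $y$ with $d(x,y)=r$, and since $r$ is even this flip preserves the parity of the weight, so $y\in\sigma$; thus $\locdiam(\sigma,x)\ge r$, while antipode-freeness forces $\locdiam(\sigma,x)\le r$, giving equality.

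For $r$ odd I would reduce to the (now even) case $r'=r-1\ge 2$: let $\sigma'\subseteq Q_{r}$ be the odd-weight set constructed above, and set $\sigma=\sigma'\times\{0,1\}\subseteq Q_{r}\times\{0,1\}=Q_{r+1}$ (this is the product pictured on the right of Figure~\ref{fig:maximalSimplices}). Antipode-freeness of $\sigma$ follows from that of $\sigma'$: the antipode of $(x,a)$ is $(\bar x,1-a)$, which lies in $\sigma$ only if $\bar x\in\sigma'$, impossible; and $|\sigma|=2\cdot 2^{r-1}=2^r$, so again $\sigma\in\nicea_r$. For the local diameter, given $(x,a)\in\sigma$ I would pick $y\in\sigma'$ with $d(x,y)=r-1$ (available since $\locdiam(\sigma',x)=r-1$) and observe that $(y,1-a)\in\sigma$ realizes distance $(r-1)+1=r$, with antipode-freeness again capping this at $r$. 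The cubic hull is full because $\chull(\sigma')=Q_{r}$ covers the first $r$ coordinates while the appended factor $\{0,1\}$ covers the last.

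The main obstacle is the local-diameter condition rather than antipode-freeness or the cubic hull, which are immediate. The key idea is the parity bookkeeping: in the even case one must check that a vertex at the extremal distance $r$ from $x$ can be chosen \emph{within} the odd-weight class, and this is exactly where the evenness of $r$ enters; the odd case is then engineered so that the extra factor $\{0,1\}$ supplies the missing unit of distance while inheriting every other property from the even construction. As a sanity check I would verify the base case $r=2$ by hand, where $\sigma=\{e_1,e_2,e_3,\mathbf{1}\}\subseteq Q_3$ indeed has local diameter $2$ at each vertex and cubic hull $Q_3$.
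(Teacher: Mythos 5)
Your proposal is correct and follows essentially the same route as the paper: in the even case you take a parity class of $Q_{r+1}$ (you use odd weight, the paper uses even weight — immaterial by symmetry), verify the local diameter by flipping $r$ coordinates (equivalently, the paper flips one coordinate of the antipode), and in the odd case you take the product of the even-case simplex in $Q_r$ with $\{0,1\}$, exactly as the paper does. The only cosmetic difference is that you certify membership in $\nicea_r$ via antipode-freeness plus the cardinality count $2^r$, whereas the paper checks the biconditional $x\in\sigma \Leftrightarrow \bar x\notin\sigma$ directly; both are valid.
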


\begin{proof}
We first prove the case when $r\ge 2$ is odd (before afterwards handling the case when $r\ge 3$ is odd).
Define $\sigma$ as the collection of vertices in $Q_{r+1}$ whose coordinates contain an even number of values $1$.
As $r+1$ is odd, this means $x\in \sigma$ iff $\bar x \notin \sigma$, so $\sigma \in \nicea_r$.

We proceed by determining the local diameter.
Let $y\in \sigma$ and define $y'$ by taking $\bar y$ and flipping one of its coordinates.
Then $y' \in \sigma$ as it has an even number of ones, and it disagrees with $y$ on all coordinates except the flipped one, hence $d(y,y')=r$.
So $\locdiam(\sigma,y) = r$ for all $y\in\sigma$.

It remains to show that $\chull(\sigma)=Q_{r+1}$.
If $\chull(\sigma) \subsetneq Q_{r+1}$, there would be a single coordinate shared by all the points of $\sigma$.
However, as $r \geq 2$ we can prescribe any single coordinate as we please, and then fill in the rest of the coordinates to obtain a vertex of $\sigma$:
\begin{itemize}
\item  if the chosen coordinate was $1$, fill another coordinate as $1$ and the rest as $0$; 
\item  if the chosen coordinate was $0$, fill all other coordinates as $0$.
\end{itemize}

Next, we handle the case when $r\ge 3$ is odd.
Let $\tau$ be the the maximal simplex in $Q_r$ obtained in the proof of the even case.
Define 
\[
\sigma = \tau \times \{0,1\}\subseteq Q_{r+1}.
\]
Formally speaking, $\sigma = \iota_{[r]}^{(0)}(Q_r) \cup \iota_{[r]}^{(1)}(Q_r) = Q_r^{(0)} \cup Q_r^{(1)}$, with the associated index set being $S=[r]$. 
We first prove $\sigma \in \nicea_r$.
A point $x\in \sigma$ is of the form $x=y \times \{i\}$ with $y\in \tau, i\in \{0,1\}$.
As $\bar x = \bar y \times \{1-i\}$ and $y\in \nicea_{r-1}$, we see that $x\in \sigma$ iff $\bar x \notin \sigma$.
 
We proceed by determining the local diameter.
Take $x=y \times \{i\} \in \sigma$.
As $\locdiam(\tau,y)=r-1$, there exists $y'\in \tau$ with $d(y,y')=r-1$.
But then $y' \times \{1-i\} \in \sigma$ and $d\left(y \times \{i\},y' \times \{1-i\}\right)=r$.

It remains to show that $\chull(\sigma)=Q_{r+1}$.
Similarly as in the proof of the even case, this follows from the fact that as $r \geq 3$ we can prescribe any single coordinate as we please,
and then fill in the rest of the coordinates to obtain a vertex of $\sigma$:
\begin{itemize}
\item the last coordinate can be chosen freely by the construction of $\sigma$; 
\item any of the first $r$ coordinates can be choosen freely by the construction and by the even case.
\end{itemize}
\end{proof}

We are now in position to prove the main result of this section, Theorem~\ref{ThmMain1}, which states that $\rank H_{2^r-1}( \vr{Q_{n}}{r}) \geq 2^{n-(r+1)}\binom{n}{r+1}$.

\begin{proof}[Proof of Theorem~\ref{ThmMain1}]
For notational convenience, let $k = 2^{n-(r+1)}\binom{n}{r+1}$.
There are $k$ isometric copies of $Q_{r+1}$ in $Q_n$ obtained via embeddings $\iota$, which we enumerate as $C_1, C_2, \ldots, C_{k}$.
For each $i$:
\begin{enumerate}
\item Let $\sigma_i$ be the maximal simplex in $C_i$ obtained from Proposition~\ref{PropReduction} and Lemma~\ref{LemmaREvenAndOdd}.
\item Let $[\alpha_i]$ be the mentioned cross-polytopal generator of $H_{2^r -1}(\vr{C_i}{r})$, and recall the coefficient of $\sigma_i$ in $\alpha_i$ is $1$.
\item Let $\omega_i$ be the $(2^r - 1)$-cochain on $Q_n$ mapping $\sigma_i$ to $1$ and the rest of the $(2^r-1)$-dimensional simplices to $0$.
As $\sigma_i$ is a maximal simplex in $Q_n$, the cochains $\omega_i$ are cocycles.
 \item Note that $\sigma_i$ is not contained as a term in $\alpha_j$ for any $i \neq j$.
Indeed, if that was the case, $\sigma_i$ would be contained in the lower-dimensional cube $C_i \cap C_j$, contradicting the conclusion $\chull(\sigma)=Q_{r+1}$ from Lemma~\ref{LemmaREvenAndOdd}.
As a result, $[\omega_i] \frown [\alpha_i]=1$ and $[\omega_i] \frown [\alpha_j]=0$ for $i \neq j$.
\end{enumerate}
It remains to prove that homology classes $[\alpha_i]$ in $H_{2^r-1}( \vr{Q_{n}}{r})$ (via the natural inclusion) are linearly independent.
If 
$\sum_{i=1}^{k} \lambda_i [\alpha_i]=0$
for some $\lambda_i \in \mathbb{Z}$, then applying $[\omega_j]$ via the cap product we obtain $\lambda_j=0$ by (4) above.
Hence the rank of $H_{2^r-1}( \vr{Q_{n}}{r})$ is at least $k=2^{n-(r+1)}\binom{n}{r+1}$.
\end{proof}

\section{More contractions: concentrations}
\label{sec:concentrations}

Up to now the only contractions that we have utilized are the projections $\pi_S$.
In order to establish additional homology bounds we need to employ a new kind of contractions called concentrations.
The idea of a such maps in low-dimensional settings is shown in Figures~\ref{Fig1} and~\ref{Fig3}.
We proceed with an explanation of the general case.

\begin{figure}[htb]
\centering
\includegraphics[width=4in]{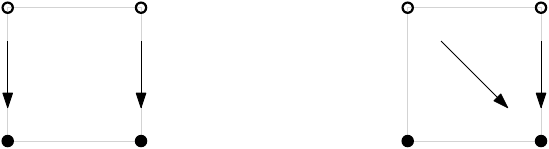}
\caption{Two contractions on $Q_2$: a projection $\pi_{[1]}$ and a concentration map.}
\label{Fig1}
\end{figure}

Let $n > k$ be positive integers.
Choose $a=(a_{k+1}, a_{k+2}, \ldots, a_n) \in \{0,1\}^{n-k}$ and let $C$ denote the copy of $Q_k$ identified as $Q_k^a$, i.e., 
\[
C= \{0,1\}^k \times \{a_{k+1}\} \times \{a_{k+2}\} \times \ldots \times \{a_{n}\}.
\]

Working towards a contraction we define a \emph{concentration map $f\colon Q_n \to C$ of codimension $n-k$} by the following rule:
\begin{enumerate}
    \item $f|_C = \textrm{Id}_C$, and
    \item for $x=(x_1, x_2, \ldots, x_n)\in Q_n \setminus C$ we define 
    \[
    f(x)=(x_1, x_2, \ldots, x_{k-1}, 1, a_{k+1}, a_{k+2}, \ldots, a_n).
    \]
\end{enumerate}
In particular, we concentrate the $k$-th coordinate of $Q_n \setminus C$ to $1$ (although we might as well have used $0$).
Permuting the coordinates of $Q_n$ generates other concentration maps.
In order to discuss the properties of concentration functions it suffices to consider the concentrations defined as $f$ above.

\begin{figure}[htb]
\centering
\includegraphics[width=4in]{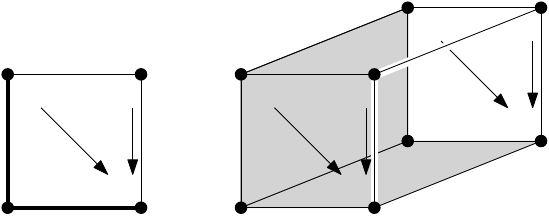}
\caption{Two concentrations of codimension one.
In both cases there are two codimension one subcubes that are being mapped isometrically (the thick $Q_1$ on the left and the shaded $Q_2$ on the right).
On the left we have $n=2$, $k=1$, and $a_2=0$, and on the right we have $n=3$, $k=2$, and $a_3=0$.}
\label{Fig3}
\end{figure}

\begin{proposition}
 \label{PropConcentration}
 Let $f$ be the concentration map as defined above.
 \begin{enumerate}
     \item[(i)] Map $f$ is a contraction.
     \item[(ii)] The $n-k+1$ many $k$-dimensional cubes $Q_{k}\subseteq Q_n$ containing the $(k-1)$-dimensional cube
     \[
     \{0,1\}^{k-1} \times \{0\} \times \{a_{k+1}\} \times \{a_{k+2}\} \times \ldots \times \{a_{n}\}
     \]
     are mapped onto $C$ isometrically by $f$.
     \item[(iii)] Map $f$ maps all other $k$-dimensional cubes $Q_{k}\subseteq Q_n$ onto cubes of dimension less than $k$.
 \end{enumerate}
\end{proposition}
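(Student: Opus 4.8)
The plan is to first record a uniform description of $f$ that eliminates the case split in its definition. For every $x\in Q_n$ one has
\[
f(x)=(x_1,\dots,x_{k-1},\,c(x),\,a_{k+1},\dots,a_n),\qquad c(x)=\begin{cases}x_k & x\in C\\ 1 & x\notin C,\end{cases}
\]
which follows directly from the two defining rules together with the fact that $x\in C$ exactly when $x_i=a_i$ for all $i>k$. With this formula, part (i) becomes a short computation: since the last $n-k$ coordinates of $f(x)$ and $f(y)$ always agree,
\[
d(f(x),f(y))=\sum_{i=1}^{k-1}|x_i-y_i|+|c(x)-c(y)|,
\]
so it suffices to verify $|c(x)-c(y)|\le |x_k-y_k|+\sum_{i>k}|x_i-y_i|$. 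This is automatic unless $|c(x)-c(y)|=1$, say $c(x)=0$ and $c(y)=1$; then $x\in C$ with $x_k=0$, and either $y\notin C$ (which forces $\sum_{i>k}|x_i-y_i|=\sum_{i>k}|a_i-y_i|\ge 1$) or $y\in C$ with $y_k=1$ (which forces $|x_k-y_k|=1$). In both cases the right-hand side is at least $1$, so $f$ is a contraction.

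For (ii) I would first identify the $n-k+1$ cubes explicitly. Since the free coordinates of $D$ are $\{1,\dots,k-1\}$, any $k$-cube containing $D$ must have these among its free coordinates plus exactly one further free coordinate $j\in\{k,k+1,\dots,n\}$; that is $n-k+1$ choices. For $j=k$ the cube is $C$ itself, on which $f$ is the identity. For $j>k$ it is the cube $E_j$ with free coordinates $\{1,\dots,k-1,j\}$, $k$-th coordinate fixed to $0$, and the remaining high coordinates fixed to $a$. On $E_j$ the uniform formula shows $f$ fixes the first $k-1$ coordinates, sends the high coordinates to $a$, and converts the free coordinate $x_j$ into the $k$-th output coordinate through $c$: one checks $c(x)=0$ exactly when $x_j=a_j$ and $c(x)=1$ exactly when $x_j=1-a_j$, so $x_j\mapsto c(x)$ is a bijection $\{0,1\}\to\{0,1\}$. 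Hence $f|_{E_j}$ is a bijection onto $C$ that preserves the number of differing free coordinates, i.e.\ an isometry.

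For (iii) the key structural observation is that, for an arbitrary $k$-cube $E$ with free-coordinate set $F$, the image $f(E)$ is again a subcube of $C$ that splits as a product. Writing $P=\pi_{[k-1]}(E)$ and $V=\{c(x):x\in E\}\subseteq\{0,1\}$, the fact that $c(x)$ depends only on $(x_k,\dots,x_n)$, which range over $E$ independently of the first $k-1$ coordinates, gives $f(E)=P\times V$. Thus, setting $p=|F\cap\{1,\dots,k-1\}|$, $\delta=1$ if $k\in F$ and $0$ otherwise, and $m=|F\cap\{k+1,\dots,n\}|$, we have $\dim f(E)=p+e$ with $e=1$ when $V=\{0,1\}$ and $e=0$ when $V$ is a singleton, while $k=p+\delta+m$. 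So $\dim f(E)<k$ reduces to the inequality $e<\delta+m$ whenever $D\not\subseteq E$. I expect the case analysis proving this to be the main obstacle, as it requires tracking exactly when both values of $c$ occur. It splits by $m$: if $m\ge 2$ then $\delta+m\ge 2>e$; if $m=0$ then $F=\{1,\dots,k\}$ and $D\not\subseteq E$ forces the fixed high coordinates of $E$ to disagree with $a$, so every point lies outside $C$, giving $V=\{1\}$ and $e=0<1$; and the delicate case $m=1$ with $k\notin F$ (where $F=\{1,\dots,k-1,j\}$) is precisely where $V=\{0,1\}$ would force $E_k=0$ and the other high coordinates to equal $a$, i.e.\ $E=E_j$, contradicting $D\not\subseteq E$, so again $e=0<1=\delta+m$. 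In every case the inequality holds, establishing $\dim f(E)<k$.
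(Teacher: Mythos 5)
Your proof is correct. For parts (i) and (ii) it is essentially the paper's argument: the uniform formula $f(x)=(x_1,\dots,x_{k-1},c(x),a_{k+1},\dots,a_n)$ just repackages the paper's three-case distance check, and your identification of the $n-k+1$ cubes through $D$, together with the bijection $x_j\mapsto c(x)$, is a slightly more explicit version of the paper's ``two halves'' computation. Part (iii) is where you genuinely depart from the paper, and your route is the more careful one. The paper disposes of (iii) in one line by asserting that every $k$-cube not containing $D$ has a constant coordinate among the first $k-1$, so that its image lies inside a fixed $(k-1)$-cube. That assertion is in fact false in general: for instance, with $n=3$, $k=2$, $a_3=0$, the faces $\{0,1\}^2\times\{1\}$ and $\{0,1\}\times\{1\}\times\{0,1\}$ do not contain $D=\{0,1\}\times\{0\}\times\{0\}$, yet their first coordinate is free; more generally, any cube whose free set is $\{1,\dots,k-1,j\}$ but whose fixed coordinates disagree with those of the cubes in (ii) (e.g.\ $k$-th coordinate fixed to $1$, or some high coordinate fixed to $1-a_i$) escapes the paper's dichotomy, even though $f$ does collapse it. These are exactly the cubes handled by your $m=0$ and $m=1,\ \delta=0$ cases, via the product decomposition $f(E)=\pi_{[k-1]}(E)\times V$ and the inequality $e<\delta+m$. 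So your longer case analysis is not an inefficiency: it is what a complete proof of (iii) requires, and it closes a genuine gap in the paper's own argument (the proposition itself, of course, remains true, as your proof shows).
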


\begin{proof}
(i) We verify the claim by a case analysis:

For $x,y\in C$ we have 
    \[
    d(f(x),f(y))=d(x,y)
    \]
    as $f$ is the identity on $C$.

For $x,y\notin C$ the quantity $d(x,y)$ is the number of coordinates in which $x$ and $y$ differ, while $d(f(x),f(y))$ is the number of coordinates amongst the first $k-1$ in which $x$ and $y$ differ.
    Thus $d(f(x),f(y)) \leq d(x,y)$.

Let $x\in C, y\notin C$.
    Then $d(x,y)$ is the sum of the following two numbers:
    \begin{itemize}
        \item The number of coordinates amongst the first $k$ coordinates in which $x$ and $y$ differ.
        \item The number of coordinates amongst the last  $n-k$ coordinates in which $x$ and $y$ differ.
        Note that this quantity is at least $1$ as $y \notin C$.
    \end{itemize}
    On the other side, $d(f(x),f(y))$ is less than or equal to the sum of the following two numbers:
    \begin{itemize}
        \item The number of coordinates amongst the first $k-1$ coordinates in which $x$ and $y$ differ.
        \item The number $1$ if the $k$-th coordinate of $x$ does not equal $1$.
    \end{itemize}
    Together we obtain $d(f(x),f(y)) \leq d(x,y)$.

This covers all possible cases.
We conclude that $f$ is a contraction.
    
(ii) The $n-k+1$ copies of $Q_k$ in question are the ones of the form
\[
\{0,1\}^{k-1} \times \{0\} \times \{a_{k+1}\} \times \{a_{k+2}\} \times \ldots 
\times \{a_{p-1}\} \times \{0,1\} \times \{a_{p+1}\} \times \ldots
\times \{a_{n}\}
\]
for $p\in \{k, k+1, \ldots, n\}$.
The case $p=k$ shows that $C$ is one of these copies.
Note that the part  
\[
\{0,1\}^{k-1} \times \{0\} \times \{a_{k+1}\} \times \{a_{k+2}\} \times \ldots 
\times \{a_{p-1}\} \times \{a_p\} \times \{a_{p+1}\} \times \ldots
\times \{a_{n}\}
\]
is contained in $C$ and thus $f$ is identity on it.
On the other hand, the part 
\[
\{0,1\}^{k-1} \times \{0\} \times \{a_{k+1}\} \times \{a_{k+2}\} \times \ldots \times \{a_{p-1}\} \times \{1- a_p\} \times \{a_{p+1}\} \times \ldots
\times \{a_{n}\}
\]
gets mapped to 
\[
\{0,1\}^{k-1} \times \{1\} \times \{a_{k+1}\} \times \{a_{k+2}\} \times \ldots 
\times \{a_{p-1}\} \times \{a_p\} \times \{a_{p+1}\} \times \ldots
\times \{a_{n}\}
\]
by retaining the first $k-1$ coordinates.
Together these two parts form $C$.

(iii) Any $k$-dimensional cube $Q_{k}\subseteq Q_n$ not mentioned in (2) has one of the first $k-1$ coordinates (say, the $p$-th coordinate) constant.
Thus the same holds for its image via $f$ and consequently, its image is contained in the corresponding $Q_{k-1}\subseteq Q_n$, i.e., the one having the $p$-th coordinate constant, and having the last $n-k$ coordinates prescribed as in $C$.
\end{proof}

\section{Homology bounds via contractions}
\label{sec:contractions-homology}

\begin{figure}
\centering
\includegraphics[width=4in]{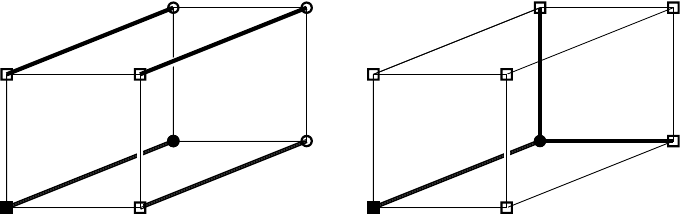}
\caption{\emph{(Left)} The projection onto the front bottom $Q_1$ isometrically identifies the four bold copies of $Q_1$, and sends other copies of $Q_1$ to a point.
\emph{(Right)} The concentration onto the bottom left $Q_1$, mapping all hollow-square vertices to the solid square vertex, isometrically identifies the three bold copies of $Q_1$, and sends the other copies of $Q_1$ to a point.
In this case the concentration is of codimension $2$, i.e., maps $Q_3 \to Q_1$.
As the codimension $t$ increases, the number of isometrically identified subcubes is exponential $2^t$ for projections and linear $t+1$ for concentrations.
The exponential increase leads to weaker lower bound in Theorem \ref{ThmMain2} than the linear increase in Theorems \ref{ThmMain3} and \ref{ThmMain4}.
}
\label{Fig4}
\end{figure}

In Section~\ref{sec:maximal} we showed how the appearance of cross-polytopal homology classes in certain dimensions of the Vietoris--Rips complexes of cubes generate independent homology elements in Vietoris--Rips complexes of higher-dimensional cubes.
Applying Proposition~\ref{PropContrEmbed} to the canonical projections implies that the homology of each smaller subcube embeds.
However, the independence of homology classes arising from various subcubes was proved using maximal simplices; this argument depended heavily on the fact that convenient (cross-polytopal) homology representatives were available to us.
In this section we aim to provide an analogous result for homology in any dimension, without prior knowledge of homology generators.

\begin{example}
\label{Ex1}
As a motivating example, consider the graph $\vr{Q_3}{1}$.
The cube $Q_3$ contains six subcubes $Q_2$ and each $\vr{Q_2}{1}\simeq S^1$ has the first Betti number equal to $1$.
However, the first Betti number of $Q_3$ is not $6$ but rather $5$, demonstrating that homology classes of various subcubes might in general interfere, i.e., not be independent.
In Theorem~\ref{ThmMain1} we proved there is no such interference in a specific case (involving cross-polytopes).
In the more general case of this section, we prove lower bounds even when independence may not hold.
\end{example}

\textbf{The general setting of this section}: Fix $r, q\in \{1,2,\ldots\}$.
Let $p\ge 1$ be the smallest integer for which $H_q(\vr{Q_p}{r})\neq 0$.
We implicitly assume that such a $p$ exists, i.e., that $H_q(\vr{Q_n}{r})$ is non-trivial for some $n$.

The cube $Q_n$ contains $2^{n-p} \binom{n}{p}$ canonical $Q_p$ subcubes.
We aim to estimate the rank of the homomorphism
\[
\bigoplus_{{2^{n-p} \binom{n}{p}}}
H_q(\vr{Q_p}{r}) \to H_q(\vr{Q_n}{r})
\]
induced by the map
\[
\coprod_{2^{n-p} \binom{n}{p}} Q_p \to Q_n,
\]
consisting of the natural inclusions of all of the $Q_p$ subcubes.

\subsection{Homology bounds via projections}

We will start with the simplest argument to demonstrate how contractions, in this case projections, may be used to lower bound the homology.

\begin{theorem}
\label{ThmMain2}
Let $q\ge 1$.
If $p$ is the smallest integer for which $H_q(\vr{Q_p}{r})\neq 0$, then for $n \geq p$,
\[
\rank H_{q}( \vr{Q_{n}}{r}) \geq \binom{n}{p} \cdot \rank H_{q}( \vr{Q_{p}}{r}).
\]
\end{theorem}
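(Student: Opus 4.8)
The plan is to exhibit $\binom{n}{p}$ subcubes $Q_p \leq Q_n$ whose homology classes in $H_q(\vr{Q_n}{r})$ are linearly independent, and to prove independence by constructing, for each such subcube, a projection $Q_n \to Q_p$ that "detects" that subcube's homology while annihilating all the others. The key observation is that although there are $2^{n-p}\binom{n}{p}$ total isometric copies of $Q_p$ in $Q_n$, the projections $\pi_S$ depend only on the coordinate set $S$ and not on the offset; so I would fix one representative subcube per coordinate set $S \in \binom{[n]}{p}$, giving exactly $\binom{n}{p}$ chosen subcubes.

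The key steps, in order, are as follows. First, for each $S \in \binom{[n]}{p}$, let $\iota_S$ denote the inclusion of the subcube $Q_p^{S}$ (say with zero offset) and let $\pi_S \colon Q_n \to Q_p$ be the associated projection from Section~\ref{ssec:embeddings}. By Lemma~\ref{Lemma1} each $\pi_S$ is a contraction, and by Proposition~\ref{PropContrEmbed} the inclusion $\iota_S$ induces an injection $\iota_{S*}\colon H_q(\vr{Q_p}{r}) \to H_q(\vr{Q_n}{r})$. Second, I would record the crucial compatibility $\pi_S \circ \iota_S = \mathrm{id}_{Q_p}$, so that on homology $\pi_{S*}\circ\iota_{S*} = \mathrm{id}$ on $H_q(\vr{Q_p}{r})$. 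Third, and this is the heart of the argument, I would show that the \emph{composite} $\pi_{S*}\circ\iota_{S'*}$ is zero whenever $S\neq S'$, by arguing that $\pi_S$ collapses the subcube $Q_p^{S'}$ onto a cube of dimension strictly less than $p$: since $S\neq S'$, there is a coordinate varying in $Q_p^{S'}$ but lying outside $S$, so $\pi_S$ restricted to $Q_p^{S'}$ factors through a $Q_{p'}$ with $p' < p$, and by minimality of $p$ we have $H_q(\vr{Q_{p'}}{r}) = 0$, forcing the induced map to vanish.

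Putting these together, I would finish by a standard "detecting functional" argument: if $\sum_{S}\lambda_S\, \iota_{S*}[\alpha_S] = 0$ in $H_q(\vr{Q_n}{r})$ for classes $[\alpha_S]$ realizing a basis of the image subgroups, then applying $\pi_{S_0*}$ for a fixed $S_0$ kills every term except $S=S_0$ (by step three) and returns $\lambda_{S_0}$ times the injected image (by step two), forcing $\lambda_{S_0}=0$; ranging over $S_0$ gives independence. Hence the images of the $\binom{n}{p}$ chosen copies together contribute $\binom{n}{p}\cdot \rank H_q(\vr{Q_p}{r})$ to the rank, yielding the bound.

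The main obstacle I anticipate is step three, establishing that the off-diagonal composites vanish. One must verify that $\pi_S$ genuinely sends the \emph{simplices} of $\vr{Q_p^{S'}}{r}$ into a lower-dimensional subcube's Vietoris--Rips complex (not merely the vertices), so that the induced chain map factors through $\vr{Q_{p'}}{r}$ with $p'<p$; this uses that $\pi_S$ is a simplicial contraction and that its image on $Q_p^{S'}$ is literally an isometric copy of a smaller cube, after which minimality of $p$ does the rest. The diagonal injectivity (steps one and two) is essentially immediate from the cited results, so the crux of the work is this factorization-through-a-smaller-cube claim.
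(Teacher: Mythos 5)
Your proposal is correct and follows essentially the same route as the paper's proof: one representative subcube per coordinate set $S$, the projections $\pi_S$ as detecting maps, minimality of $p$ to annihilate off-diagonal composites (since $\pi_S$ sends any subcube with free coordinates $S'\neq S$ into a cube of dimension $|S\cap S'|<p$, whose $q$-th Vietoris--Rips homology vanishes), and the identity $\pi_S\circ\iota_S=\mathrm{id}_{Q_p}$ to extract each coefficient on the diagonal. The factorization-through-a-smaller-cube step you flag as the crux is exactly what the paper relies on as well, and it goes through because $\pi_S$ is $1$-Lipschitz and hence simplicial on Vietoris--Rips complexes.
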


\begin{proof}
Let the $Q_p^*$ denote the $\binom{n}{p}2^{n-p}$ different $p$-dimensional subcubes of $Q_n$ (say as $*$ varies from $1$ to $\binom{n}{p}2^{n-p}$).
For a subset $S \subseteq [n]$ of cardinality $p$, the projection $\pi_S \colon Q_n \to Q_p$ is an isometry on $2^{n-p}$ of these subcubes $Q_p^*$ (the ones having exactly the coordinates $S$ as the free coordinates).
For each such $S$ choose one of these cubes and designate it as $Q_{p,S}$, thus marking $\binom{n}{p}$ copies of $Q_p \subseteq Q_n$.
For each $S$ let $a_{1,S}, a_{2,S}, \ldots, a_{\rank H_{q}( \vr{Q_{p}}{r}),S}$ denote a largest linearly independent collection in $H_{q}( \vr{Q_{p,S}}{r})$.
We claim that the collection $\{a_{i,S}\}$ of cardinality $\binom{n}{p}\cdot \rank H_{q}( \vr{Q_{p}}{r})$ is linearly independent in $H_{q}( \vr{Q_{n}}{r})$.

Assume 
\[
\sum_{i,S}\lambda_{i,S} \cdot  a_{i,S}=0
\]
for some coefficients $\lambda_{i,S}$.
Fix a subset $S' \subseteq [n]$ of cardinality $p$, and to the equality above apply the map on $H_q$ induced by the projection $\pi_{S'}\colon Q_n \to Q_p$.
As $p$ is the minimal dimension of a cube in which $H_q$ is non-trivial on Vietoris--Rips complexes, and as $\pi_{S'}$ maps all of the $Q_p^*$ to a smaller-dimensional cube except for the ones with exactly the coordinates in $S'$ as the free coordinates, 
we obtain $(\pi_{S'})_*(a_{i,S})=0$ for all $S \neq S'$,
where $*$ denotes the induced map on homology.
As $\pi_{S'}|_{Q_{p,S'}}\to Q_p$ is a bijection on the corresponding Vietoris--Rips complexes, and induces an isomorphism homology, we have reduced our equation to
\[
\sum_{i}\lambda_{i,S'} \cdot  a_{i,S'}=0.
\]
Consequently, $\lambda_{i,S'}=0$ for all $i$, as $\{a_{i,S'}\}$ forms a linearly independent collection in $H_{q}( \vr{Q_{p}}{r})$ by definition.
As $S'$ was arbitrary we conclude $\lambda_{i,S}=0$ for all $i$ and $S$, and thus the claim holds.
\end{proof}

\subsection{Codimension $1$ homology bounds via concentrations}

The following result states that in codimension $1$ (i.e., when we increase the dimension of the cube by $1$ from $p$ to $p+1$), all but at most one of the subcubes induce independent inclusions on homology.
Example~\ref{Ex1} shows that all subcubes need not induce independent inclusions of homology (and we see that Theorem~\ref{ThmMain3} is tight in the case of Example~\ref{Ex1}).

\begin{theorem}
\label{ThmMain3}
Let $q\ge 1$.
If $p$ is the smallest integer for which $H_q(\vr{Q_p}{r})\neq 0$, then
\[
\rank H_{q}( \vr{Q_{p+1}}{r}) \geq 
(2p+1)
\cdot \rank H_{q}( \vr{Q_{p}}{r}).
\]
\end{theorem}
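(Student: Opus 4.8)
The plan is to exhibit $2p+1$ of the $2p+2$ copies of $Q_p$ sitting inside $Q_{p+1}$ whose homology generators push forward to linearly independent classes in $H_q(\vr{Q_{p+1}}{r})$, detecting them by a carefully chosen family of $2p+1$ contractions $Q_{p+1}\to Q_p$ that realizes a block-triangular system. For notation, given a coordinate $j\in[p+1]$ and value $v\in\{0,1\}$, let $D_{j,v}\subseteq Q_{p+1}$ be the copy of $Q_p$ with coordinate $j$ fixed to $v$ and the remaining $p$ coordinates free; these are exactly the $\binom{p+1}{p}2^1=2p+2$ subcubes $Q_p\subseteq Q_{p+1}$. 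I would discard $D_{p+1,1}$ and work with the other $2p+1$ subcubes, fixing in each a maximal linearly independent family of $m:=\rank H_q(\vr{Q_p}{r})$ classes and pushing them into $H_q(\vr{Q_{p+1}}{r})$ via the inclusions.

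The key dichotomy, inherited from the proof of Theorem~\ref{ThmMain2}, is that for any contraction $g\colon Q_{p+1}\to Q_p$ and any subcube $D\cong Q_p$, the composite $H_q(\vr{D}{r})\to H_q(\vr{Q_{p+1}}{r})\xrightarrow{g_*}H_q(\vr{Q_p}{r})$ is an isomorphism when $g|_D$ is an isometry onto $Q_p$, and is zero when $g$ maps $D$ onto a cube of dimension $<p$ (then it factors through $H_q$ of a lower-dimensional cube, which vanishes by minimality of $p$). Thus each contraction acts on our chosen generators block-by-block as either an isomorphism or as zero, and it suffices to arrange that the corresponding block matrix is lower triangular with invertible diagonal blocks.

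Concretely, I would order the chosen subcubes as $D_{p+1,0}$, then $D_{1,0},D_{1,1},D_{2,0},D_{2,1},\ldots,D_{p,0},D_{p,1}$, and assign a detector to each. To $D_{p+1,0}$ I assign the projection $\pi_{[p]}$ onto the first $p$ coordinates, which is an isometry on $D_{p+1,0}$ and on the discarded $D_{p+1,1}$ and collapses every other chosen subcube; among the chosen columns it is nonzero only on the diagonal entry. To each $D_{j,0}$ with $j\in[p]$ I assign the codimension-one concentration based at $D_{p+1,0}$ whose concentrated coordinate is $j$, which by Proposition~\ref{PropConcentration}(ii)--(iii) is an isometry precisely on the perpendicular pair $\{D_{j,0},D_{p+1,0}\}$ and collapses all $2p$ remaining subcubes; its only nonzero entries are on the diagonal ($D_{j,0}$) and on the strictly earlier column $D_{p+1,0}$. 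To each $D_{j,1}$ with $j\in[p]$ I assign the projection $\pi_{[p+1]\setminus\{j\}}$ forgetting coordinate $j$, an isometry on exactly $D_{j,0}$ and $D_{j,1}$, contributing the diagonal entry ($D_{j,1}$) and the strictly earlier entry ($D_{j,0}$). The resulting matrix is lower triangular with isomorphisms on the diagonal, so applying the detectors in order and invoking the iso/zero dichotomy forces all coefficients to vanish one block at a time; hence the $(2p+1)m$ classes are independent and $\rank H_q(\vr{Q_{p+1}}{r})\ge(2p+1)\,\rank H_q(\vr{Q_p}{r})$.

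The main obstacle I anticipate is pinning down the exact isometry/collapse pattern of the concentrations, namely confirming through Proposition~\ref{PropConcentration} that the prescribed codimension-one concentration is an isometry on exactly $\{D_{j,0},D_{p+1,0}\}$ and sends every other $Q_p$ to a cube of dimension $<p$, and then checking that the full assignment is genuinely triangular in the stated order. The conceptual point that lets us reach $2p+1$ rather than the $p+1$ of Theorem~\ref{ThmMain2} is exactly that concentrations detect \emph{perpendicular} pairs of subcubes, a separation unavailable to projections, which see only parallel pairs.
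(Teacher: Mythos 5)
Your proof is correct and takes essentially the same route as the paper's: discard one of the $2p+2$ subcubes $Q_p\subseteq Q_{p+1}$, push forward a maximal independent family from each of the remaining $2p+1$, and kill coefficients one subcube at a time via the dichotomy that a contraction restricted to a subcube is either an isometry (inducing an isomorphism) or lands in a cube of dimension less than $p$ (inducing zero, by minimality of $p$), with Proposition~\ref{PropConcentration} supplying the perpendicular-pair detectors. The only difference is bookkeeping: the paper uses concentrations exclusively and zeroes coefficients by propagating along the adjacency graph of subcubes starting from the discarded one, whereas your explicit lower-triangular ordering, with projections handling the parallel pairs $\{D_{j,0},D_{j,1}\}$, is the same elimination made concrete.
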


\begin{proof}
The cube $Q_{p+1}$ contains 
$2p+2$ subcubes $Q_p$, which we enumerate as $Q_{p,1}, Q_{p,2}, \ldots, 
Q_{p,2p+2}$.
For each $1 \le j \le 
2p+1$, let $\{a_{i,j} \mid 1\le i \le \rank H_{q}( \vr{Q_{p}}{r})\}$ denote a largest linearly independent collection in $H_{q}( \vr{Q_{p,j}}{r})$.
We claim that the collection 
\[
\left\{a_{i,j}\mid 1\le i \le \rank H_{q}( \vr{Q_{p}}{r}), 1\le j\le 2p+1\right\}
\]
of cardinality 
$(2p+1)\cdot \rank H_{q}( \vr{Q_{p}}{r})$ is linearly independent.

Assume 
\begin{equation}
\label{Eq2}
\sum_{i,j}\lambda_{i,j} \cdot  a_{i,j}=0    
\end{equation}
for some coefficients $\lambda_{i,j}$ with $1\le i \le \rank H_{q}( \vr{Q_{p}}{r})$ and $1\le j\le 2p+1$.
Note that there are no representatives from $Q_{p,2p+2}$.
Choose a subcube $Q_{p-1}\subseteq Q_{p,2p+2}$.
It is the intersection of $Q_{p,2p+2}$ and another $p$-dimensional cube of the form $Q_{p,*}$, say $Q_{p,1}$.
We apply the concentration map $f$ corresponding to these choices using Proposition~\ref{PropConcentration}:
\begin{enumerate}
\item[(i)] $f$ is bijective on exactly two $p$-cubes: $Q_{p,2p+2}$ and $Q_{p,1}$.
\item[(ii)] $f$ maps all other $p$-cubes to cubes of dimension less than $p$.
By the choice of $p$ (as the first dimension of a cube in which nontrivial $p$-dimensional homology appears in its Vietoris--Rips complex), we get $f_*(a_{i,j})=0$ for all $i>1$.
\item[(iii)] As a result, after applying the induced map $f_*$ on homology, Equation~\eqref{Eq2} simplifies to 
\[
\sum_{j}\lambda_{j} \cdot  a_{1,j}=0.
\]
Consequently, $\lambda_{1,j}=0$ for all $i$, as $\{a_{1,j}\}$ forms a linearly independent collection in $H_{q}( \vr{Q_{p,1}}{r})$ by definition.
\end{enumerate}

We keep repeating the procedure of the previous paragraph:
\begin{itemize}
    \item Choose a subcube $Q_{p,j}$, whose corresponding coefficients $\lambda_{i,j}$ have been determined to be zero, and choose a neighboring subcube $Q_{p,j'}$ (i.e., a subcube with a common $(p-1)$-dimensional cube), whose corresponding coefficients $\lambda_{i,j'}$ have not yet been determined to be zero.
    \item Apply the concentration map corresponding to these two $p$-dimensional cubes to deduce that the coefficients $\lambda_{i,j'}$ also equal zero.
\end{itemize}
Any cube $Q_{p,j'}$ can be reached from  $Q_{p,2p+2}$ by an appropriate sequence of cubes (i.e., $Q_{p,2p+2}$, a $(p-1)$-dimensional subcube thereof, an enclosing $p$-dimensional cube, a $(p-1)$-dimensional subcube thereof, $\ldots, Q_{p,j'}$).
Therefore, we can eventually deduce that $\lambda_{i,j}=0$ for all $i$ and $j$.
Hence the rank bound holds due to the setup of Equation~\eqref{Eq2}.
\end{proof}

\subsection{General homology bounds via concentrations}
\label{ssec:gen-via-concentrations}

In this subsection we will generalize the argument of Theorem~\ref{ThmMain3} to deduce a lower bound on homology on all subsequent larger (not just codimension one) cubes.

The core idea is the following.
In the previous subsection we were able to ``connect'' $p$-dimensional cubes by concentrations.
Each chosen concentration was bijective on exactly two adjacent cubes of the form $Q_p$ sharing a common $(p-1)$-dimensional cube; see item (i) in the proof of Theorem~\ref{ThmMain3}.
If the coefficients of Equation~\eqref{Eq2} corresponding to one of the two copies of $Q_p$ were known to be trivial, then the homological version of the said concentration transformed Equation~\eqref{Eq2} so that only the coefficients corresponding to the other of the two cubes $Q_p$ were retained; see item (ii).
These coefficients were then deduced to be trivial by their definition as the resulting equation contained only terms arising from a single $Q_p$, see item (iii).

In this subsection we generalize this argument to all higher-dimensional cubes.
Instead of concentrations isolating $2$ adjacent copies of $Q_p$ (as happens in codimension $1$), the concentrations will in general isolate $n-p+1$ (i.e., codimension plus one; see Proposition~\ref{PropConcentration}(ii)) copies of $Q_p$.
The main technical question is thus to determine how many of the subcubes are independent in the above sense.

\begin{theorem}
\label{ThmMain4}
Let $q\ge 1$.
If $p$ is the smallest integer for which $H_q(\vr{Q_p}{r})\neq 0$, then for $n \geq p$,
\[
\rank H_{q}( \vr{Q_{n}}{r}) \geq 
\sum_{i=p}^n 2^{i-p} \binom{i-1}{p-1}
\cdot \rank H_{q}( \vr{Q_{p}}{r}).
\]
\end{theorem}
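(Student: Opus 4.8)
The plan is to exhibit $N_n := \sum_{i=p}^{n} 2^{i-p}\binom{i-1}{p-1}$ explicit isometric copies of $Q_p$ inside $Q_n$ whose $q$-dimensional classes are linearly independent in $H_q(\vr{Q_n}{r})$; since each such copy injects on homology by Lemma~\ref{Lemma1} and Proposition~\ref{PropContrEmbed} and so contributes $\rank H_q(\vr{Q_p}{r})$ independent classes, the stated bound follows. I would call a subcube with free-coordinate set $S$ and offset $b$ \emph{selected} when $b_j=0$ for every $j>\max S$. Grouping by $\max S = i$ gives $\binom{i-1}{p-1}$ choices of $S$ and $2^{i-p}$ admissible offsets, so there are exactly $N_n$ selected subcubes. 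After fixing, compatibly with the coordinate-retaining isometries, a maximal independent family $\{a_{\ell,D}\}$ in each $H_q(\vr{D}{r})$, the theorem reduces to showing that $\{a_{\ell,D}\mid D \text{ selected}\}$ is independent.

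I would prove independence by induction on $n$, the base case $n=p$ being a single subcube. For the step, split $Q_n$ along its last coordinate and call a selected subcube a \emph{prism} if $n\in S$, and a \emph{non-prism} otherwise; the non-prisms lie in the bottom copy and are precisely the selected subcubes of $Q_{n-1}$, while the prisms are exactly the $\binom{n-1}{p-1}2^{n-p}$ subcubes having $n$ as a free coordinate. Starting from a hypothetical relation $\sum_{D,\ell}\lambda_{\ell,D}\,a_{\ell,D}=0$, I would first apply the projection $\pi_{[n-1]}$, which is a contraction by Lemma~\ref{Lemma1}. Every prism has $n$ free, so $\pi_{[n-1]}$ collapses it onto a $(p-1)$-dimensional subcube; since $p$ is minimal, $H_q(\vr{Q_{p-1}}{r})=0$ and all prism terms die. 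The non-prism terms map to the corresponding generators of the selected subcubes of $Q_{n-1}$, which are independent by the inductive hypothesis, forcing $\lambda_{\ell,D}=0$ for every non-prism $D$.

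It then remains to kill the prism coefficients, and here the key idea is to concentrate the last coordinate. Given a prism $P_0$ with free coordinates $S\ni n$ and offset $b$, I would apply the codimension-$(n-p)$ concentration onto $C=P_0$ that treats coordinate $n$ as the concentrated coordinate (Proposition~\ref{PropConcentration}, after a permutation of coordinates). Its fan of $n-p+1$ isometrically preserved copies of $Q_p$ consists of $P_0$ together with $n-p$ subcubes in which coordinate $n$ has been \emph{fixed} — that is, non-prisms — whereas every other prism lies off the fan and is collapsed to dimension $<p$ and hence killed on $H_q$. Applying the induced map to the surviving relation leaves only $\sum_\ell \lambda_{\ell,P_0} f_*(a_{\ell,P_0})=0$, and since $f|_{P_0}=\mathrm{id}$ this gives $\lambda_{\ell,P_0}=0$. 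As $P_0$ is arbitrary, all coefficients vanish.

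The step I expect to require the most care is the combinatorial bookkeeping of the concentration's fan: verifying precisely that making coordinate $n$ the concentrated coordinate yields a fan equal to $\{P_0\}$ together with $n-p$ non-prisms, so that the \emph{only} prism met is $P_0$ itself, and that every competing prism is genuinely sent to a lower-dimensional cube by Proposition~\ref{PropConcentration}(iii). The conceptual engine throughout is the minimality of $p$, which is exactly what makes both the projection and the concentration annihilate the unwanted classes; once that is secured, the two-step pattern ``project to remove the prisms, then concentrate the last coordinate to isolate each prism'' should close the induction cleanly and, incidentally, recover Theorem~\ref{ThmMain3} as the codimension-one case.
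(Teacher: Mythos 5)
Your proof is correct and takes essentially the same route as the paper's: the same induction on $n$, the same split of the selected copies of $Q_p$ into prisms (coordinate $n$ free) and non-prisms confined to one fixed copy of $Q_{n-1}$, with the projection $\pi_{[n-1]}$ plus the inductive hypothesis eliminating the non-prism coefficients and coordinate-$n$ concentrations (Proposition~\ref{PropConcentration}) isolating each prism, exactly as in the paper. The only differences are cosmetic: the paper places its inductively selected non-prisms in the front copy, so its concentration fans (which lie in the rear) can be applied \emph{before} the projection, whereas your rear-side convention ($b_j=0$ beyond $\max S$) forces the projection step to come first --- an ordering constraint you correctly identify and implement.
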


In particular cases, Theorem~\ref{ThmMain4} reduces to the following.
For $n=p$ we get the tautology that $\rank H_{q}( \vr{Q_{p}}{r})$ is at least as large as itself.
For $n=p+1$ we recover Theorem~\ref{ThmMain3}:
\begin{align*}
\rank H_{q}( \vr{Q_{n}}{r})
&\geq 
\rank H_{q}( \vr{Q_{p}}{r})+ 2 \binom{p}{p-1}
\cdot \rank H_{q}( \vr{Q_{p}}{r}) \\
&= (2p+1)\cdot \rank H_{q}( \vr{Q_{p}}{r}).
\end{align*}

\begin{figure}[htb]
\centering
\includegraphics[width=4in]{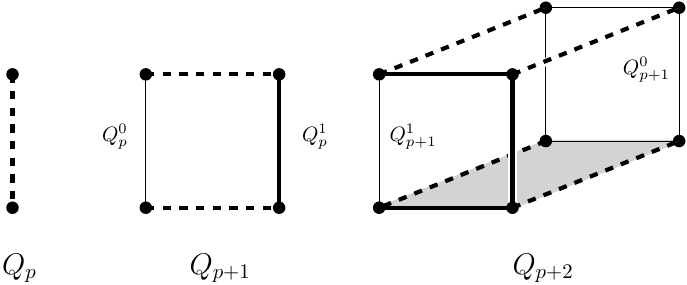}
\caption{A sketch of the proof of Theorem~\ref{ThmMain4}.
In each step the thick dashed lines represent copies of $Q_p$ in $M_n$ yielding new independent homology classes within the Vietoris-Rips complex of $Q_n$, in addition to the established independent homology classes (thick solid lines) arising from certain copies of $Q_p$, denoted by $F_n$, within the front face $Q_{n-1}^1$.
The multiplicative factor in the theorem is the total number of the thick edges, both dashed and solid ones.}
\label{Fig5}
\end{figure}

\begin{proof}[Proof of Theorem~\ref{ThmMain4}]
The cube $Q_n$ consists of two disjoint copies of $Q_{n-1}$; see Figure~\ref{Fig5}:
\begin{itemize}
    \item the rear one with the last coordinate $0$, denoted by $Q_{n-1}^0$, and
    \item the front one with the last coordinate $1$, denoted by $Q_{n-1}^1$.
\end{itemize}
We partition the $Q_p$ subcubes of $Q_n$ into three classes:
\begin{itemize}
    \item The ones contained in the rear $Q_{n-1}^0$ where vertices have last coordinate $0$, denoted by $R_n$.
    \item The ones contained in the front $Q_{n-1}^1$ where vertices have last coordinate $1$, denoted by $F_n$.
    \item The ones contained in the middle passage between them, denoted by $M_n$.
    Each such $Q_p$ in $M_n$ is of the form $D \times \{0,1\}$, where $D \subseteq Q_{n-1}$ is a copy of $Q_{p-1}$.
\end{itemize}

We will prove that the following $Q_p$ subcubes of $Q_n$ induce independent embeddings on homology $H_q$ of Vietoris--Rips complexes: 
the elements of $M_n$ (dashed cubes in Figure~\ref{Fig5}) and the elements of $F_n$ that have inductively been shown to include independent embeddings on homology $H_q$ of Vietoris--Rips complexes on $Q^1_{n-1}$ (bold cubes in Figure~\ref{Fig5}).
The initial cases of the inductive process have been discussed in the paragraph before the proof.
For $n=p+1$ this is Theorem~\ref{ThmMain3}.

The cardinality of $M_n$ is $2^{(n-1)-(p-1)} \binom{n-1}{p-1} = 2^{n-p} \binom{n-1}{p-1}$, which is the number of $Q_{p-1}$ subcubes in $Q_{n-1}^0$.
Each such subcube has the last coordinate constantly $0$.
Taking a union with a copy of the same $Q_{p-1}$ subcube with the last coordinates changed to $1$, we obtain a $Q_p$ subcube in $M_n$.
It is apparent that all elements of $M_n$ arise this way.
Let us enumerate the elements of $M_n$ as $Q_{p,j}^M$ with $1\le j \le 2^{n-p} \binom{n-1}{p-1}$.
For each such $j$ let $\{a_{i,j}\mid 1\le i \le \rank H_{q}( \vr{Q_{p}}{r})\}$ denote a largest linearly independent collection in $H_{q}( \vr{Q_{p,j}^M}{r})$.

The cardinality of the copies of $Q_p$ in $F_n$ that have inductively been shown to include independent embeddings on homology $H_q$ of Vietoris--Rips complexes on $Q^1_{n-1}$ equals $\sum_{i=p}^{n-1} 2^{i-p} \binom{i-1}{p-1}$, by inductive assumption.
Let us enumerate them by $Q_{p,j}^F$ with $1\le j\le \sum_{i=p}^{n-1} 2^{i-p} \binom{i-1}{p-1}$.
For each such $j$ let $\{b_{i,j} \mid 1\le i\le \rank H_{q}( \vr{Q_{p}}{r})\}$ denote a largest linearly independent collection in $H_{q}( \vr{Q_{p,j}^F}{r})$.

Assuming the equality 
\begin{equation}
\label{Eq3a}
\sum_{i,j}\lambda_{i,j} \cdot  a_{i,j} + \sum_{i,j}\mu_{i,j} \cdot  b_{i,j}=0    
\end{equation}
in $H_{q}( \vr{Q_{n}}{r})$ for some coefficients $\lambda_{i,j}$, $\mu_{i,j}$, we claim that all coefficients equal zero.
This will prove the theorem as the number of involved terms equals $\sum_{i=p}^n 2^{i-p} \binom{i-1}{p-1}
\cdot \rank H_{q}( \vr{Q_{p}}{r})$.

We will first prove that the coefficients $\lambda_{i,j}$ are all zero.
Fix some $1\le j \le 2^{n-p} \binom{n-1}{p-1}$, and let $D$ be the copy of $Q_{p-1}$ in $Q_{n-1}^0$ so that $C\coloneqq D \times \{0,1\}$ is equal to $Q_{p,j}^M$.
Let $f$ be any concentration $Q_n \to C:=D \times \{0,1\}$.
(For example, in Figure~\ref{Fig4} one can visualize $D$ as the solid round vertex, and $C$ as the edge between the two solid vertices.)
By Proposition~\ref{PropConcentration}:
\begin{itemize}
    \item $f$ maps any $Q_p$ subcube of $Q_n$ that contains $D$ bijectively onto $C=Q_{p,j}^M$.
    All such subcubes except for $C$ are contained in $R_n$.
    \item $f$ maps all of the other $Q_p$ subcubes of $Q_n$ to lower-dimensional subcubes.
\end{itemize}
These two observations imply that applying the induced map $f_*$ on homology to Equation~\eqref{Eq3a}, we obtain 
$
\sum_{i}\lambda_{i,j} \cdot  a_{i,j}=0.
$
By the choice of $\{a_{i,j}\}_i$ as an independent collection of homology classes for $H_{q}(\vr{Q_{p,j}^M}{r})$, we obtain $\lambda_{i,j}=0$ for all $i$.
Since this can be done for any $1\le j \le 2^{n-p} \binom{n-1}{p-1}$, we have $\lambda_{i,j}=0$ for all $i$ and $j$.

We have thus reduced Equation~\eqref{Eq3a} to 
$
\sum_{i,j}\mu_{i,j} \cdot  b_{i,j}=0.
$
Let $\pi_S \colon Q_n \to Q_{n-1}$ be the projection that forgets the last coordinate of each vector (explicitly, $S=[n-1]\subseteq [n]$).
Note that the restrictions of $\pi_S$ to $Q_n^0$ and to $Q_n^1$ are bijections.
Hence, after applying the induced map $(\pi_S)_*$ on homology, the inductive definition of the $b_{i,j}$ implies that $\mu_{i,j}=0$ for all $i$ and $j$.

\end{proof}

\subsection{Comparison with known results}
\label{SubsComparison}

In this subsection we demonstrate that our lower bounds agree with actual ranks of homology in many known cases.
In particular, for $r=1,2,3$, if we assume that we know the homotopy types or Betti numbers for the first few cases ($n\le r+1$ or $n \le r+2$), then we show that our lower bounds on the Betti numbers of $\vr{Q_n}{r}$ are tight (i.e.\ optimal) for all $n$ and for all dimensions of homology.
For $r=4$ we explain the best lower bounds we know on the Betti numbers of $\vr{Q_n}{4}$, which are based on homology computations by Ziqin Feng.
Since the homotopy types of $\vr{Q_n}{4}$ are unknown for $n\ge 6$, we do not know if these bounds are tight. 
For a summary see Table \ref{table:homotopy-types2}.

\subsubsection{The case $\mathbf{r=1}$}

Assuming the obvious homeomorphism $\vr{Q_2}{1} \cong S^1$, the lower bound of Theorem~\ref{ThmMain4} with $p=2$ gives 
\[
\rank H_1(\vr{Q_n}{1})
\ge \sum_{i=2}^n 2^{i-2} \binom{i-1}{1}
=\sum_{i=2}^n (i-1) 2^{i-2}
=n 2^{n-1} - 2^n + 1, 
\]
where the last step is explained in Appendix~\ref{app:1}.
This inequality is actually an equality, as one can see via an Euler characteristic computation.
Indeed, $\vr{Q_n}{1}$ has $2^n$ vertices and $n2^{n-1}$ edges, and so the Euler characteristic is $2^n - n2^{n-1}$.
As $\vr{Q_n}{1}$ is connected, the rank of $H_1(\vr{Q_n}{1})$ equals $n 2^{n-1} - 2^n + 1$.
See~\cite[Proposition~4.12]{carlsson2020persistent} for a related computation.



\subsubsection{The case $\mathbf{r=2}$}

We know that the embedding of each individual subcube induces an injection on homology.
Our results provide the lower bounds on the rank of the map on homology induced by the inclusion of \emph{all} subcubes $Q_p$ (where $p$ is the dimension of the first appearance of $q$-dimensional homology $H_q$).
The upper bound for homology obtained in this way is $2^{n-p}\binom{n}{p} \rank H_q(\vr{Q_p}{r})$, where the the multiplicative constant is the number of all $Q_p$ subcubes on $Q_n$.
These possible generators are all independent in the case of cross-polytopal generators (Theorem~\ref{ThmMain1}).
In case this bound is exceeded we can thus deduce that certain new homology classes appear that are \emph{not} generated by the embeddings of $Q_p$ subcubes.

\begin{table}[bht]
{\footnotesize
\def\arraystretch{1.2}
\begin{tabular}{| >{$} c <{$} || >{$} c <{$} | >{$} c <{$} || >{$} c <{$} | >{$} c <{$} | }
\hline
_n \backslash ^r  & 1 & 1 & 2 & 2 \\
\hline
1  & * & * & * & * \\
\hline
2 & \textcolor{violet}{\mathbf{S^1}} & \beta_{1}\geq \textcolor{violet}{1} & * & * \\
\hline
3 & \bigvee^5 S^1 & \beta_{1}\geq \textcolor{violet}{ 5} & \textcolor{red}{\mathbf{S^3}} &  \beta_{3}\geq \textcolor{red}{1} \\
\hline
4 & \bigvee^{17} S^1 & \beta_{1}\geq  \textcolor{violet}{17} & \bigvee^8 S^3 \vee \textcolor{blue}{\mathbf{S^3}} & \beta_{3}\geq \textcolor{red}{ 8} + \textcolor{blue}{1} \\
\hline
5 & \bigvee^{49} S^1 & \beta_{1}\geq  \textcolor{violet}{49} & \bigvee^{49} S^3 & \beta_{3}\geq \textcolor{red}{ 40} + \textcolor{blue}{9} \\
\hline
6 & \bigvee^{129} S^1 & \beta_{1}\geq  \textcolor{violet}{129} & \bigvee^{209} S^3 & \beta_{3}\geq \textcolor{red}{ 160} + \textcolor{blue}{49} \\
\hline
7 & \bigvee^{321} S^1 & \beta_{1}\geq  \textcolor{violet}{321} & \bigvee^{769} S^3 & \beta_{3}\geq \textcolor{red}{ 560} + \textcolor{blue}{209} \\
\hline
8 & \bigvee^{769} S^1 & \beta_{1}\geq  \textcolor{violet}{769} & \bigvee^{2561} S^3 & \beta_{3}\geq \textcolor{red}{ 1792} + \textcolor{blue}{769} \\
\hline
9 & \bigvee^{1793} S^1 & \beta_{1}\geq  \textcolor{violet}{1793} & \bigvee^{7937} S^3 & \beta_{3}\geq \textcolor{red}{ 5376} + \textcolor{blue}{2561} \\
\hline
10 & \bigvee^{4097} S^1 & \beta_{1}\ge  \textcolor{violet}{4097} & \bigvee^{23297} S^3 & \beta_{3}\geq \textcolor{red}{ 15360} + \textcolor{blue}{7937} \\
\hline
11 & \bigvee^{9217} S^1 & \beta_{1}\ge  \textcolor{violet}{9217} & \bigvee^{65537} S^3 & \beta_{3}\geq \textcolor{red}{ 42240} + \textcolor{blue}{23297} \\
\hline
12 & \bigvee^{20481} S^1 & \beta_{1}\ge \textcolor{violet}{20482} & \bigvee^{178177} S^3 & \beta_{3}\geq \textcolor{red}{ 112640} + \textcolor{blue}{65537} \\
\hline\vdots&\vdots&\vdots&\vdots&\vdots\\\hline\hline
_n \backslash ^r  & 3 & 3 & 4 & 4 \\
\hline
1  & * & * & * & * \\
\hline
2 & * & * & * & * \\
\hline
3 & * & * & * & * \\
\hline
4 & \textcolor{red}{\mathbf{S^7}} & \beta_{7}\geq \textcolor{red}{1} & * & * \\
\hline
5 & \bigvee^{10} S^7 \vee \textcolor{blue}{\mathbf{S^4}} & \beta_{7}\geq  \textcolor{red}{10}, \beta_{4}\geq  \textcolor{blue}{1} & \textcolor{red}{\mathbf{S^{15}}} & \beta_{15}\geq \textcolor{red}{1}\\
\hline
6 & \bigvee^{60} S^7 \vee \bigvee^{11} S^4 & \beta_{7}\geq  \textcolor{red}{60}, \beta_{4}\geq  \textcolor{blue}{11} & \beta_{15}=\textcolor{red}{12}+\textcolor{gray}{\mathbf{2}}, \beta_7=\textcolor{brown}{\mathbf{239}} & \beta_{15}\ge\textcolor{red}{12}+\textcolor{gray}{2}, \beta_7\ge\textcolor{brown}{239} \\
\hline
7 & \bigvee^{280} S^7 \vee \bigvee^{71} S^4 & \beta_{7}\geq  \textcolor{red}{280}, \beta_{4}\geq  \textcolor{blue}{71} & ? & \beta_{15}\ge\textcolor{red}{84}+\textcolor{gray}{26}, \beta_7\ge\textcolor{brown}{3107} \\
\hline
8 & \bigvee^{1120} S^7 \vee \bigvee^{351} S^4 & \beta_{7}\geq  \textcolor{red}{1120}, \beta_{4}\geq  \textcolor{blue}{351} & ? & \beta_{15}\ge\textcolor{red}{448}+\textcolor{gray}{194}, \beta_7\ge\textcolor{brown}{23183} \\
\hline
9 & \bigvee^{4032} S^7 \vee \bigvee^{1471} S^4 & \beta_{7}\geq  \textcolor{red}{4032}, \beta_{4}\geq  \textcolor{blue}{1471} & ? & \beta_{15}\ge\textcolor{red}{2016}+\textcolor{gray}{1090}, \beta_7\ge\textcolor{brown}{130255} \\
\hline
10 & \bigvee^{13440} S^7 \vee \bigvee^{5503} S^4 & \beta_{7}\geq  \textcolor{red}{13440}, \beta_{4}\geq  \textcolor{blue}{5503} & ? & \beta_{15}\ge\textcolor{red}{8064}+\textcolor{gray}{5122}, \beta_7\ge\textcolor{brown}{612079} \\
\hline
11 & \bigvee^{42240} S^7 \vee \bigvee^{18943} S^4 & \beta_{7}\geq  \textcolor{red}{42240}, \beta_{4}\geq  \textcolor{blue}{18943} & ? & \beta_{15}\ge\textcolor{red}{29568}+\textcolor{gray}{21250}, \beta_7\ge\textcolor{brown}{2539375} \\
\hline
12 & \bigvee^{126720} S^7 \vee \bigvee^{61183} S^4 & \beta_{7}\geq  \textcolor{red}{126720}, \beta_{4}\geq  \textcolor{blue}{61183} & ? & \beta_{15}\ge\textcolor{red}{101376}+\textcolor{gray}{80386}, \beta_7\ge\textcolor{brown}{9606127} \\
\hline
\end{tabular}
}
\caption{For each of $r=1,2,3,4$, the pair of columns represent the comparison between \emph{(left)} known homotopy types and homology groups of $\vr{Q_n}{r}$ with \emph{(right)} the bounds arising from our results.
The bold red spheres are the initial cross-polytopal spheres that induce the red lower bounds on Betti numbers due to Theorem \ref{ThmMain1}.
For $r=1,2,3$, the bold blue and violet spheres induce the blue and violet lower bounds on Betti numbers due to Theorems~\ref{ThmMain4} and~\ref{ThmMain5}.
Observe that the total lower bounds match the known Betti numbers for $r=1,2,3$.
The homology computations for $\vr{Q_6}{4}$ by Ziqin Feng induce the lower bounds on Betti numbers of $\vr{Q_n}{4}$ for $n\ge 6$ by Theorems~\ref{ThmMain4} and~\ref{ThmMain5}.
Theorem~\ref{ThmGeom}(\textbf{iii}) states that in each column $r\ge 2$, we have at least one homology class (such as the features in blue) that is not induced from a red cross-polytopal sphere.}
\label{table:homotopy-types2}
\end{table}

In the case $r=2$, $H_{3}( \vr{Q_{3}}{2})$ has rank one and is generated by a cross-polytopal element.
Even though $Q_4$ has only $2 \binom{4}{1}=8$ subcubes $Q_3$, we have $\rank H_{3}( \vr{Q_{4}}{2})=9>8$.
This indicates the appearance of a homology class $\alpha$ not generated by embedded homologies of $Q_3$ subcubes; see Section~\ref{sec:geometric} for a description of this ``geometric'' generator.
This new homology class contributes to the homology of higher-dimensional cubes in the same way as the homology described by Theorem~\ref{ThmMain4}.
We formalize this in the next subsection; see Theorem \ref{ThmMain5}.
Together, this cross-polytopal generator and this geometric generator $\alpha$ explain all of the homology when $r=2$:
\begin{enumerate}
\item[(a)] The cross-polytopal elements provide $2^{n-3} \binom{n}{3}$ independent generators for $H_{3}( \vr{Q_{n}}{2})$ (Theorem~\ref{ThmMain1}).
\item[(b)] The non-cross-polytopal element $\alpha$ provides $\sum_{i=4}^n 2^{i-4} \binom{i-1}{3}=\sum_{i=3}^{n-1} 2^{i-3} \binom{i}{3}$ more independent generators for $H_{3}( \vr{Q_{n}}{2})$ (Theorem~\ref{ThmMain5}).
\end{enumerate}
Thus, the combined lower bound says that the rank of $H_{3}( \vr{Q_{n}}{2})$ is at least
\begin{align*}
&2^{n-3} \binom{n}{3} + \sum_{i=3}^{n-1} 2^{i-3} \binom{i}{3} \\
=& \sum_{i=3}^{n} 2^{i-3} \binom{i}{3} \\
=& 2^{n-2}\binom{n}{3} - \sum_{i=1}^{n-2} 2^{i-1}\binom{i+1}{2} && \text{as explained in Appendix~\ref{app:2}}\\
=& \sum_{i=1}^{n-2} \left(2^{n-2}-2^{i-1}\right)\binom{i+1}{2} && \text{since }\binom{n}{3}=\sum_{i=1}^{n-2}\binom{i+1}{2} \\
=& \sum_{i=1}^{n-1} \left(2^{n-2}-2^{i-1}\right)\binom{i+1}{2} && \text{since }2^{n-2}-2^{(n-1)-1}=0 \\
=& \sum_{0 \leq j < i < n}(j+1)(2^{n-2}- 2^{i-1}) && \text{since }\binom{i+1}{2}=\sum_{j=0}^{i-1}(j+1) \\
\eqqcolon&\ c_n.
\end{align*}
Since $\vr{Q_n}{2} \simeq \bigvee_{c_n} S^3$ (Theorem~\ref{ThmAA}), this combined lower bound explains all of the homology when $r=2$.

\subsubsection{The case $\mathbf{r=3}$}

Using only the cross-polytopal generator for $\vr{Q_4}{3} \cong S^7$ and also the homotopy equivalence $\vr{Q_5}{3} \simeq \bigvee^{10} S^7 \bigvee S^4$, we obtain the following lower bounds bounds on homology when $r=3$:
\begin{enumerate}
\item[(a)] The cross-polytopal elements provide $2^{n-4} \binom{n}{4}$ independent generators for $H_{7}( \vr{Q_{n}}{3})$ (Theorem~\ref{ThmMain1}).
\item[(b)] The non-cross-polytopal four-dimensional homology element appearing at $n=5$ provides\\ $\sum_{i=5}^n 2^{i-5} \binom{i-1}{4} = \sum_{i=4}^{n-1} 2^{i-4} \binom{i}{4}$ independent generators for $H_{4}( \vr{Q_{n}}{3})$ (Theorem~\ref{ThmMain4}).
\end{enumerate}
The total lower bound equals the actual rank of all homology  of $\vr{Q_{n}}{3}$, due to Theorem~\ref{ThmZiqin} which says
\[\vr{Q_n}{3} \simeq \bigvee_{2^{n-4}\binom{n}{4}} S^7 \ \vee \bigvee_{\sum_{i=4}^{n-1} 2^{i-4} \binom{i}{4}} S^4.\]

\subsubsection{The case $\mathbf{r=4}$}
\label{SubSubsComparison:4}

Ziqin Feng at Auburn University has computed the homology of $\vr{Q_6}{4}$.
To do so, he used the Easley Cluster at Auburn University (a system for high-performance and parallel computing), about 180 GB of memory, and the Ripser software package~\cite{bauer2021ripser}.
His computations show that
\[
H_q(\vr{Q_6}{4};\Z/2) \cong \begin{cases}
0 & \text{if }1\le q\le 6\\
(\Z/2)^{239} & \text{if }q=7\\
0 & \text{if }8\le q\le 14\\
(\Z/2)^{14} & \text{if }q=15.
\end{cases}
\]
This computation is shown in the first $r=4$ column in Table~\ref{table:homotopy-types2}, and the consequences implied by this computation and by our results are shown in the second $r=4$ column in that table.

\subsubsection{The case of general $\mathbf{r}$}

In Section~\ref{sec:geometric} we prove that in each column of Tables~\ref{table:homotopy-types} or~\ref{table:homotopy-types2} with $r \geq 2$, a new homology generator appears in $\vr{Q_n}{r}$ with $n\ge r+1$, i.e., below the diagonal entry $n=r+1$ where the cross-polytopal generator appears.
Examples of these new homology generators are in the items (b) above for $r=2$ and $3$.

\subsection{Propagation of non-initial homology}

For positive integers $m < n$ let 
\[
\Psi_{m,n} \colon \coprod_{2^{n-m} \binom{n}{m}} Q_m \to Q_n
\]
denote the natural inclusion of all the $Q_m$ subcubes of $Q_n$.
Given a positive integer $q$, let the homomorphism
\[
(\Psi_{m,n})_* \colon \bigoplus_{{2^{n-m} \binom{n}{m}}}
H_q(\vr{Q_m}{r}) \to H_q(\vr{Q_n}{r})
\]
be the induced map on homology.

Our previous results have provided lower bounds on the rank of maps $(\Psi_{p,n})_*$, where $p$ is the smallest parameter for which $H_q(\vr{Q_p}{r})$ is non-trivial.
Our next result explains how an analogous result also holds for other maps $(\Psi_{m,n})_*$.

\begin{theorem}
 \label{ThmMain5}
 Let $q\ge 1$.
 Let 
 \[
 \rr_m = \rank \Big(H_q(\vr{Q_m}{r}) \ \big/ \ \image (\Psi_{m-1,m})_*\Big).
 \]
 Then for $n \geq m \geq p$, 
 \[
\rank \Big(H_q(\vr{Q_n}{r}) \ \big/ \ \image (\Psi_{m-1,n})_*\Big)
\geq 
\sum_{i=m}^n 2^{i-m} \binom{i-1}{m-1}
\cdot \rr_m.
\]
\end{theorem}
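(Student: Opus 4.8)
The plan is to mirror the inductive proof of Theorem~\ref{ThmMain4}, replacing the role that the minimality of $p$ played there with passage to the quotient by $\image(\Psi_{m-1,\cdot})_*$. I fix $m$ and induct on $n\ge m$, strengthening the statement (exactly as in the proof of Theorem~\ref{ThmMain4}) to record that the independent classes arise from specified $Q_m$ subcubes. The base case $n=m$ holds with equality, since $\sum_{i=m}^m 2^{i-m}\binom{i-1}{m-1}\cdot\rr_m=\rr_m=\rank\big(H_q(\vr{Q_m}{r})/\image(\Psi_{m-1,m})_*\big)$ by the definition of $\rr_m$.

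For the inductive step I decompose $Q_n=Q_{n-1}^0\sqcup Q_{n-1}^1$ into rear and front copies of $Q_{n-1}$ and partition the $Q_m$ subcubes into $R_n$, $F_n$, and $M_n$ exactly as in Theorem~\ref{ThmMain4}, so that $\#M_n=2^{n-m}\binom{n-1}{m-1}$. For each $C\in M_n$ I choose $\rr_m$ classes $a_{i,C}\in H_q(\vr{C}{r})$ forming a largest collection that is independent in $H_q(\vr{C}{r})/\image(\Psi_{m-1,m}^C)_*$, where $\Psi_{m-1,m}^C$ denotes the inclusion of the $(m-1)$-subcubes of $C$; applying the inductive hypothesis to the front $Q_{n-1}^1$, I select $\sum_{i=m}^{n-1}2^{i-m}\binom{i-1}{m-1}\cdot\rr_m$ classes $b_{i,C}$ from certain $C\in F_n$ that are independent in $H_q(\vr{Q_{n-1}^1}{r})/\image(\Psi_{m-1,n-1})_*$. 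These counts sum to $\sum_{i=m}^n 2^{i-m}\binom{i-1}{m-1}\cdot\rr_m$, so it suffices to prove that all of these classes are independent in $\bar H_n:=H_q(\vr{Q_n}{r})/\image(\Psi_{m-1,n})_*$. Suppose then that $\sum\lambda_{i,C}a_{i,C}+\sum\mu_{i,C}b_{i,C}$ maps to $0$ in $\bar H_n$, i.e.\ the combination lies in $\image(\Psi_{m-1,n})_*$.

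To eliminate the $\lambda$'s I fix $C=D\times\{0,1\}\in M_n$ with $D$ a copy of $Q_{m-1}$ and apply a concentration $f\colon Q_n\to C$ via Proposition~\ref{PropConcentration}. The first key point is that $f_*$ carries $\image(\Psi_{m-1,n})_*$ into $\image(\Psi_{m-1,m}^C)_*$, since any $Q_{m-1}$ subcube of $Q_n$ is sent into a subcube of $C$ of dimension at most $m-1$; hence $f_*$ descends to a map $\bar H_n\to H_q(\vr{C}{r})/\image(\Psi_{m-1,m}^C)_*$. The second key point is that every $C'\in M_n\cup F_n$ with $C'\ne C$ fails to contain $D$ (those in $M_n$ because distinct $(m-1)$-subcubes are distinct, those in $F_n$ because they have last coordinate $1$) and is therefore collapsed by $f$ onto a subcube of $C$ of dimension at most $m-1$, so the classes $a_{i,C'}$ and $b_{i,C'}$ die in the quotient. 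As $f|_C=\mathrm{Id}_C$, the descended map leaves only $\sum_i\lambda_{i,C}a_{i,C}=0$ in the quotient for $C$, forcing $\lambda_{i,C}=0$ for all $i$; ranging over $C\in M_n$ kills every $\lambda$. The surviving relation $\sum\mu_{i,C}b_{i,C}\in\image(\Psi_{m-1,n})_*$ is then handled by $\pi_{[n-1]}\colon Q_n\to Q_{n-1}$, which restricts to an isometry on $Q_{n-1}^1$ and sends every $Q_{m-1}$ subcube to a subcube of dimension at most $m-1$; thus $(\pi_{[n-1]})_*$ descends to $\bar H_n\to\bar H_{n-1}$ and carries the $b_{i,C}$ to the inductively chosen classes, whose independence in $\bar H_{n-1}$ forces all $\mu=0$. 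This completes the induction. I expect the main obstacle to be exactly the bookkeeping of the previous paragraph: verifying that both $f_*$ and $(\pi_{[n-1]})_*$ descend to the quotients and that each ``collapsed'' term lands inside the relevant $\image(\Psi_{m-1,\cdot})_*$ rather than merely vanishing — this is the step where the quotient genuinely replaces the minimality hypothesis of Theorem~\ref{ThmMain4}, and once it is in place the independence argument runs in parallel with that theorem.
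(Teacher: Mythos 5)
Your proposal is correct and takes essentially the same route as the paper's proof: the same $R_n/F_n/M_n$ partition of the $Q_m$ subcubes, concentrations onto $C=D\times\{0,1\}$ to kill the $M_n$ coefficients, the projection $\pi_{[n-1]}$ to kill the $F_n$ coefficients, and an induction on $n$ for fixed $m$. If anything you are more explicit than the paper on the one point it leaves implicit — verifying that $f_*$ and $(\pi_{[n-1]})_*$ descend to the quotients by $\image(\Psi_{m-1,\cdot})_*$ — whereas the paper frames its induction as being on $m$ (with base case Theorem~\ref{ThmMain4}) and bounds the subquotient $\image(\Psi_{m,n})_*\big/\image(\Psi_{m-1,n})_*$ rather than the full quotient; these differences are cosmetic.
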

Note that $(\Psi_{p-1,p})_*=0$ by the definition of $p$, and so we recover Theorem~\ref{ThmMain4} by setting $m=p$ in Theorem~\ref{ThmMain5}.

\begin{proof}
The proof proceeds by induction on $m$.
We will actually prove 
\[
\rank(\rho_{m,n})
\geq 
\sum_{i=m}^n 2^{i-m} \binom{i-1}{m-1}
\cdot \rr_m,
\quad
\text{where}
\quad
\rho_{m,n}=\image (\Psi_{m,n})_* \ \big/ \ \image (\Psi_{m-1,n})_*
\]
The base case of the induction at $m=p$ is Theorem~\ref{ThmMain4}, since in this case $\image (\Psi_{p-1,p})_*=0$ as $H_q(\vr{Q_{p-1}}{r})=0$.
It remains to show the inductive step.
Our proof is essentially the same as the proof of Theorem~\ref{ThmMain4} applied to $\rho_{m,n}$ instead of to $H_q(\vr{Q_n}{r})$.

The cube $Q_n$ consists of two disjoint copies of $Q_{n-1}$; see Figure~\ref{Fig5}:
\begin{itemize}
    \item the rear one with the last coordinate $0$, denoted by $Q_{n-1}^0$, and
    \item the front one with the last coordinate $1$, denoted by $Q_{n-1}^1$.
\end{itemize}
We partition the $Q_m$ subcubes of $Q_n$ into three classes:
\begin{itemize}
    \item The ones contained in the rear $Q_{n-1}^0$ where vertices have last coordinate $0$, denoted by $R_n$.
    \item The ones contained in the front $Q_{n-1}^1$ where vertices have last coordinate $1$, denoted by $F_n$.
    \item The ones contained in the middle passage between them, denoted by $M_n$.
    Each such $Q_m$ in $M_n$ is of the form $D \times \{0,1\}$, where $D \subseteq Q_{n-1}$ is a copy of $Q_{m-1}$.
\end{itemize}

We will prove that the following $Q_m$ subcubes of $Q_n$ induce independent homology in $\rho_{m,n}$: 
the elements of $M_n$ (dashed cubes in Figure~\ref{Fig5}) and the elements of $F_n$ that have inductively been shown to include independent embeddings in $H_{q}(\vr{Q^1_{n-1}}{r}) /\image (\Psi_{m-1,n-1})_*$ (bold cubes in Figure~\ref{Fig5}).
The base case of the inductive process is Theorem~\ref{ThmMain4}.

The cardinality of $M_n$ is $2^{(n-1)-(m-1)} \binom{n-1}{m-1} = 2^{n-m} \binom{n-1}{m-1}$, which is the number of $Q_{m-1}$ subcubes in $Q_{n-1}^0$.
Each such subcube has the last coordinate constantly $0$.
Taking a union with a copy of the same $Q_{m-1}$ subcube with the last coordinates changed to $1$, we obtain a $Q_m$ subcube in $M_n$.
It is apparent that all elements of $M_n$ arise this way.
Let us enumerate the elements of $M_n$ as $Q_{m,j}^M$ with $1\le j \le 2^{n-m} \binom{n-1}{m-1}$.
For each such $j$ let $\{a_{i,j}\mid 1\le i \le \rr_m\}$ denote a largest linearly independent collection in $H_{q}( \vr{Q_{m,j}^M}{r})/\image (\Psi_{m-1,m})_*$.

The cardinality of the copies of $Q_m$ in $F_n$ that have inductively been shown to include independent embeddings in $H_{q}(\vr{Q^1_{n-1}}{r})/\image(\Psi_{m-1,n-1})_*$ equals $\sum_{i=m}^{n-1} 2^{i-m} \binom{i-1}{m-1}$, by inductive assumption.
Let us enumerate them by $Q_{m,j}^F$ with $1\le j\le \sum_{i=m}^{n-1} 2^{i-m} \binom{i-1}{m-1}$.
For each such $j$ let $\{b_{i,j} \mid 1\le i\le \rr_m\}$ denote a largest linearly independent collection in $H_{q}( \vr{Q_{m,j}^F}{r})/\image (\Psi_{m-1,m})_*$.

Assuming the equality 
\begin{equation}
\label{Eq3b}
\sum_{i,j}\lambda_{i,j} \cdot  a_{i,j} + \sum_{i,j}\mu_{i,j} \cdot  b_{i,j}=0    
\end{equation}
in $\rho_{m,n}$ for some coefficients $\lambda_{i,j}$, $\mu_{i,j}$, we claim that all coefficients equal zero.
This will prove the theorem as the number of involved terms equals $\sum_{i=m}^n 2^{i-m} \binom{i-1}{m-1} \cdot \rr_m$.

We will first prove that the coefficients $\lambda_{i,j}$ are all zero.
Fix some $1\le j \le 2^{n-m} \binom{n-1}{m-1}$, and let $D$ be the copy of $Q_{m-1}$ in $Q_{n-1}^0$ so that $C\coloneqq D \times \{0,1\}$ is equal to $Q_{m,j}^M$.
Let $f$ be any concentration $Q_n \to C:=D \times \{0,1\}$.
(For example, in Figure~\ref{Fig4} one can visualize $D$ as the solid round vertex, and $C$ as the edge between the two solid vertices.)
By Proposition~\ref{PropConcentration}:
\begin{itemize}
    \item $f$ maps any $Q_m$ subcube of $Q_n$ that contains $D$ bijectively onto $C=Q_{m,j}^M$.
    All such subcubes except for $C$ are contained in $R_n$.
    \item $f$ maps all of the other $Q_m$ subcubes of $Q_n$ to lower-dimensional subcubes, and hence the map 
\[ H_q (\vr{Q_m}{r})\ \big/ \ \image (\Psi_{m-1,m})_* \to H_q(\vr{C}{r})\ \big/ \ \image (\Psi_{m-1,m})_* \]
induced by $f|_C$ is trivial.
\end{itemize}
These two observations imply that applying the induced map $f_*$ on homology to Equation~\eqref{Eq3b}, we obtain 
$
\sum_{i}\lambda_{i,j} \cdot  a_{i,j}=0.
$
By the choice of $\{a_{i,j}\}_i$ as an independent collection of homology classes, we obtain $\lambda_{i,j}=0$ for all $i$.
Since this can be done for any $1\le j \le 2^{n-p} \binom{n-1}{p-1}$, we have $\lambda_{i,j}=0$ for all $i$ and $j$.

We have thus reduced Equation~\eqref{Eq3b} to 
$
\sum_{i,j}\mu_{i,j} \cdot  b_{i,j}=0.
$
Let $\pi_S \colon Q_n \to Q_{n-1}$ be the projection that forgets the last coordinate of each vector (explicitly, $S=[n-1]\subseteq [n]$).
Note that the restrictions of $\pi_S$ to $Q_n^0$ and to $Q_n^1$ are bijections.
Hence, after applying the induced map $(\pi_S)_*$ on homology, the inductive definition of the $b_{i,j}$ as being independent in $H_{q}(\vr{Q^1_{n-1}}{r})/\image (\Psi_{m-1,n-1})_*$ implies that $\mu_{i,j}=0$ for all $i$ and $j$.
\end{proof}

\section{Geometric generators}
\label{sec:geometric}

For $r\ge 0$, let $f_n\colon \vr{Q_n}{r}\to[0,1]^n$ be the map defined by sending each vertex of $Q_n$ to the corresponding point in $\{0,1\}^n\subseteq [0,1]^n$, and then by extending linearly to simplices.
Let $n(r)\in\{0,1,\ldots\}$ be the smallest integer $n$ such that $f_n\colon \vr{Q_n}{r}\to[0,1]^n$ is not surjective.
The fact that this is well-defined follows from Lemma~\ref{LemGeomGen}, which proves that $n(r) \leq 2r+1$.

The main theorem in this section is the following.

\begin{theorem}
\label{ThmGeom}
For all $r\ge 2$,
\begin{description}
    \item[i] There exists some $m \leq n(r)$ such that $\pi_{m-1}(\vr{Q_m}{r})\neq 0$;
    \item[ii] There exists some $k \leq m$ such that $H_{k-1}(\vr{Q_m}{r})\neq 0$;
    \item[iii] Not all of the above non-trivial homotopy group (resp.\ homology group) is generated by the initial cross-polytopal spheres, i.e., the image of the induced map $(\Psi_{r+1,m})_*$ of Vietoris-Rips complexes $\vr{Q_{r+1}}{r} \hookrightarrow \vr{Q_m}{r}$ at scale $r$ is not all of $\pi_{m-1}(\vr{Q_m}{r})$ (resp.\ $H_{k-1}(\vr{Q_m}{r})$).
\end{description}
\end{theorem}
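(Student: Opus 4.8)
The plan is to exploit the failure of surjectivity of $f_m$ at $m=n(r)$: composing $f_m$ with a retraction onto the boundary sphere of the cube produces a map that is essential yet kills every cross-polytopal class, which yields all three conclusions at once. I take $m=n(r)$, which is finite with $m\le 2r+1$ by Lemma~\ref{LemGeomGen}, and which satisfies $m\ge r+2$ (one checks that $\vr{Q_{r+1}}{r}$ still covers the cube, so $f_{r+1}$ is surjective). By minimality of $m$, the map $f_j\colon\vr{Q_j}{r}\to[0,1]^j$ is surjective for every $j<m$. Each of the $2m$ facets of $[0,1]^m$ is the image of the corresponding facet subcomplex $\vr{Q_{m-1}}{r}\subseteq\vr{Q_m}{r}$, on which $f_m$ restricts to a copy of the surjective map $f_{m-1}$; hence $f_m$ covers all of $\partial[0,1]^m$, and any uncovered point $y$ lies in the interior. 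Radially retracting gives $[0,1]^m\setminus\{y\}\simeq\partial[0,1]^m=S^{m-1}$ and a map $g=\rho\circ f_m\colon\vr{Q_m}{r}\to S^{m-1}$ that agrees with $f_m$ on the boundary subcomplex $B:=\bigcup_{\mathrm{facets}}\vr{Q_{m-1}}{r}$.

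The core of the argument is to show that $g_*\colon H_{m-1}(\vr{Q_m}{r})\to H_{m-1}(S^{m-1})\cong\Z$ is nonzero. The generator of the target is the boundary fundamental class $[\partial[0,1]^m]=\sum_{\mathrm{facets}}\pm[\,\mathrm{facet},\partial\,\mathrm{facet}\,]$, and I would realize it by a cycle supported on $B$. Building inductively on the dimension of the cube's faces, I would construct a compatible family of relative fundamental cycles $z_j$ in the face subcomplexes of $\vr{Q_j}{r}$ that map under $f_j$ to a generator of $H_j([0,1]^j,\partial[0,1]^j)$, using the surjectivity of $f_j$ for $j<m$ to push a relative fundamental class across each face, and with orientations inherited from $\partial[0,1]^m$ so that the facet contributions assemble. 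This produces $\alpha\in H_{m-1}(B)$ whose image in $H_{m-1}(\vr{Q_m}{r})$ satisfies $g_*\alpha\ne0$, proving $H_{m-1}(\vr{Q_m}{r})\ne0$ and hence (ii) with $k=m$. For the homotopy statement (i) I would either invoke Hurewicz after checking that $\vr{Q_m}{r}$ is $(m-2)$-connected, so that $\pi_{m-1}\cong H_{m-1}\ne0$, or promote $\alpha$ to an actual map $h\colon S^{m-1}\to\vr{Q_m}{r}$ lifting the boundary inclusion through $f_m$ face by face, with $g\circ h\simeq\mathrm{id}$.

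I expect the \textbf{main obstacle} to be exactly the upgrade from \emph{surjective} to \emph{degree one}: mere surjectivity of $f_{m-1}$ onto a facet does not by itself force the facet's relative fundamental class to be hit, so the inductive step must carry orientation data and draw on the explicit covering structure produced in the proof of Lemma~\ref{LemGeomGen}. I would first run this argument with $\Z/2$ coefficients to obtain a mod-$2$ degree and sidestep sign bookkeeping, and only afterward track orientations for the integral statement.

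Finally, part (iii) follows once the geometric class $\alpha$ above is in hand. Every cross-polytopal generator lies in $\image(\Psi_{r+1,m})_*$ and is supported on an isometric subcube $Q_{r+1}\hookrightarrow Q_m$ with $r+1<m$. Under $f_m$ this subcube's Vietoris--Rips complex maps into the corresponding $(r+1)$-dimensional face of $[0,1]^m$, which is contractible, so the composite $\vr{Q_{r+1}}{r}\to S^{m-1}$ factors through a contractible space and is therefore null on both homology and homotopy. Thus $g_*\circ(\Psi_{r+1,m})_*=0$ while $g_*\alpha\ne0$, so $\alpha\notin\image(\Psi_{r+1,m})_*$, which is (iii). (Should the degenerate case $m=r+1$ ever occur, the cross-polytopal class sits in dimension $2^r-1>m-1$ and cannot contribute to the $(m-1)$-dimensional geometric class, so (iii) would hold for dimensional reasons; the inequality $m\ge r+2$ rules this case out.)
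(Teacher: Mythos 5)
Your geometric setup matches the paper's in its ingredients: the uncovered interior point, the radial projection $g$, the fact that $\partial[0,1]^m$ is covered because $f_{m-1}$ is surjective by minimality of $m=n(r)$, and the observation for (iii) that cross-polytopal classes die under $g$ because (when $r+1<m$) they factor through a contractible proper face of the cube. But the core step --- producing a class $\alpha$ with $g_*\alpha\neq 0$ --- has a genuine gap that you flag yourself and never close. Surjectivity of $f_j$ for $j<m$ does not produce relative cycles hitting the relative fundamental class of $([0,1]^j,\partial[0,1]^j)$: a surjective map can have zero degree, both integrally and mod $2$ (think of a fold map), so no orientation bookkeeping extracted from surjectivity alone can make the inductive ``push a relative fundamental class across each face'' step work, and switching to $\Z/2$ coefficients does not help. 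Moreover, the proof of Lemma~\ref{LemGeomGen} contains no ``explicit covering structure'' to draw on --- it is a norm estimate plus a simple-connectivity argument --- so the resource you point to for filling the gap does not exist in the paper.

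The paper closes exactly this gap with a dichotomy that your proposal omits, and that dichotomy is the essential idea. One does not prove that $m=n(r)$ itself works; one proves that either some $m<n(r)$ already satisfies $\pi_{m-1}(\vr{Q_m}{r})\neq 0$ (and then (i) holds for that $m$), or else all of these homotopy groups vanish, and this vanishing is precisely what allows one to build an actual map $\f\colon\partial C_{N}\to\vr{Q_{N}}{r}$ with $N=n(r)$ cell by cell: the restriction of $\f$ to the boundary of each $(j+1)$-face is nullhomotopic inside the corresponding subcomplex $\vr{Q_{j+1}}{r}$, hence extends over that face, keeping the map subcube-preserving. Then $\psi=\nu\circ f_N\circ\f$ is a subcube-preserving self-map of $\partial C_N\cong S^{N-1}$, and the straight-line homotopy $H(x,t)=(1-t)\psi(x)+tx$ --- well defined exactly because $\psi$ maps each convex face into itself --- shows $\psi\simeq\mathrm{id}$, so $\f$ is essential with no degree computation or fundamental-cycle bookkeeping at all. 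This also repairs your route to (i): your ``lift the boundary inclusion face by face'' is the paper's construction, but it is only available under the vanishing hypothesis supplied by the dichotomy; and your alternative route via Hurewicz needs $(m-2)$-connectedness, which you cannot verify (the paper has only simple connectivity from Lemma~\ref{LemGeomGen}(3), and it applies Hurewicz in the opposite direction, from the first nontrivial homotopy group to homology, which is why the theorem allows $k\le m$ rather than asserting $k=m$). Note finally that the theorem only claims existence of some $m\le n(r)$; your stronger claim that $m=n(r)$ always works is exactly what forces you into the unresolvable degree argument.
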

\noindent Statement \textbf{ii} above is true for homology taken with any choice of coefficients.

An important consequence of this theorem is the following.
Together, statements \textbf{i}--\textbf{iii} imply that for each $r\ge 2$, there is a new topological feature in $\vr{Q_m}{r}$ that is not induced from an inclusion $\vr{Q_{r+1}}{r}\hookrightarrow\vr{Q_m}{r}$.
In Table~\ref{table:homotopy-types2}, these appear as the new blue $S^3$ feature in $\vr{Q_4}{2}\simeq \vee^9 S^3$, and as the new blue $S^4$ feature in $\vr{Q_5}{3}\simeq \vee^{10} S^7 \vee S^4$.

In the following example, when $r=2$, we see that we can take $k=m=n(r)$.
However, we do not know if this is the case in general.

\begin{example}
\label{ex:geom-r=2}
Fix $r=2$.
Note that $n(2)=4$ is the smallest integer $n$ such that $f_n\colon \vr{Q_n}{2}\to[0,1]^n$ is not surjective.
We will use this to show $\pi_3(\vr{Q_4}{2})\neq 0$.
The following five tetrahedra form a triangulation of $[0,1]^3$ (see Section~5.3 and in particular Figure~4 of ~\cite{VargasRosarioThesis}).
\begin{align*}
\tau_1 &= \{(0,1,1),(1,1,0),(1,0,1),(0,0,0)\} \\
\tau_2 &= \{(0,0,0),(1,0,0),(1,1,0),(1,0,1)\} \\
\tau_3 &= \{(0,1,1),(1,1,0),(0,0,0),(0,1,0)\} \\
\tau_4 &= \{(1,1,1),(0,1,1),(1,1,0),(1,0,1)\} \\
\tau_5 &= \{(0,0,0),(0,1,1),(0,0,1),(1,0,1)\}
\end{align*}
Furthermore, each tetrahedron $\tau_i$ has diameter at most $2$.
Since $[0,1]^3=\cup_{i=1}^5 |\tau_i|$, and since each $\tau_i$ is a simplex in $\vr{Q_3}{2}$, we define a surjective map $h\colon [0,1]^3 \to \vr{Q_3}{2}$ by letting $h(x)=\sum_{v\in \sigma(x)}\lambda_v v$, where $\sigma(x)$ is the unique simplex in the triangulation $[0,1]^3=\cup_{i=1}^5 |\tau_i|$ that contains $x$ in its interior.\footnote{By convention, the interior of a vertex is that vertex.}
We note that the map $h$ is continuous.
Note that $\partial([0,1]^4)$ is composed of $8$ faces, where each face is a 3-dimensional cube $[0,1]^3$.
By piecing together $8$ copies of the map $h$ in a continuous way, we obtain a map $g\colon \partial([0,1]^4) \to \vr{Q_4}{2}$.
The map $f_4\colon \vr{Q_4}{2}\to[0,1]^4$ is not surjective, although its image does contain $\partial([0,1]^4)$, which follows since $f_3\colon \vr{Q_3}{2}\to[0,1]^3$ is surjective.
Therefore, there exists a point $v\in \interior([0,1]^4)\setminus \image(f_4)$.
We thus obtain a composition
\[ \partial([0,1]^4) \xrightarrow{g} \vr{Q_4}{2} \xrightarrow{f_4} [0,1]^4\setminus\{v\} \xrightarrow{\pi_v} \partial([0,1]^4), \]
where $\pi_v \colon [0,1]^4\setminus\{v\} \to \partial([0,1]^4)$ is the radial projection away from the point $v\in \interior([0,1]^4)$.
Note that the composition $\pi_v \circ f_4 \circ g$ is equal to the identity map on $\partial([0,1]^4)$, and that we have a homeomorphism $ \partial([0,1]^4) \cong S^3$.
Since this map $\pi_v \circ f_4 \circ g$ factors through $\vr{Q_4}{2}$, it follows that $\pi_3(\vr{Q_4}{2})\neq 0$.
The proof of Theorem~\ref{ThmGeom} follows this strategy.
\end{example}

\begin{example}
Fix $r=3$.
By~\cite{feng2023homotopy} we have $\vr{Q_5}{3} \simeq \bigvee^{10} S^7 \vee S^4$.
Theorem~\ref{ThmGeom} is satisfied with $m=k=5$.
\end{example}

\begin{nonexample}
\label{nonexample-feng}
Fix $r=4$.
By Ziqin Feng's homology computations in Section~\ref{SubSubsComparison:4} we have $H_7(\vr{Q_6}{4};\Z/2)\cong (\Z/2)^{239}$, $H_{15}(\vr{Q_6}{4};\Z/2)\cong (\Z/2)^{14}$, and $H_{q}(\vr{Q_6}{4};\Z/2)=0$ for $1\le q\le 6$ and $8 \le q\le 14$.  
Since the dimensions $7$ and $15$ of nontrivial homology are not smaller than $6$, neither of these nontrivial homology groups fits the description in Theorem~\ref{ThmGeom}(\textbf{ii}).
This shows that $m>6$ when $r=4$.
In other words, Theorem~\ref{ThmGeom} guarantees that a new topological feature not yet present in the $r=4$ row of Table~\ref{table:homotopy-types2} will appear in some $\vr{Q_m}{4}$ with $m>6$.
\end{nonexample}

We now build up towards the proof of Theorem~\ref{ThmGeom}.
We will use the following notation:
\begin{itemize}
    \item $C_n = [0,1]^n$ denotes the $n$-dimensional cube.
    \item $k$-skeleton: $C_n^{(k)}= \cup \{k\text{-dimensional subcubes of } C_n \}= \cup \{\conv Q_k \mid Q_k \leq Q_n\}$.
    \item $\partial C_n = C_n^{(n-1)} \cong S^{n-1}$.
\end{itemize}

We will use the following lemma in the proof of Theorem~\ref{ThmGeom}.

\begin{lemma} 
    \label{LemGeomGen}
    For each $r\ge 2$ the following hold:
    \begin{enumerate}
        \item $n(r) \leq 2r+1$.
        \item The image of $f_{n(r)}$ contains $\partial C_{n(r)-1}$.
        \item For each $r\ge 2$ and for each $n$, $\vr{Q_n}{r}$ is connected and simply connected.
    \end{enumerate}
\end{lemma}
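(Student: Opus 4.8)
The three assertions are essentially independent, so the plan is to treat them in turn, and in each case the hypercube geometry does the work.

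For \textbf{(1)}, the plan is to exhibit a single point of $[0,1]^{2r+1}$ that is missed by $f_{2r+1}$, the natural candidate being the barycenter $c=(\tfrac12,\ldots,\tfrac12)$. A point lies in $\image(f_n)$ precisely when it is a convex combination $\sum_{v\in\sigma}\lambda_v v$ of the vertices of some simplex $\sigma\in\vr{Q_n}{r}$, i.e.\ of some $\sigma\subseteq\{0,1\}^n$ with $\diam(\sigma)\le r$. Suppose $c=\sum_{v\in\sigma}\lambda_v v$ for such a $\sigma$, and fix any $v_0\in\sigma$. Since $|c_i-(v_0)_i|=\tfrac12$ in every coordinate, $\sum_i|c_i-(v_0)_i|=\tfrac{2r+1}{2}=r+\tfrac12$. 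On the other hand, writing $c-v_0=\sum_v\lambda_v(v-v_0)$ and applying the triangle inequality coordinatewise gives
\[
\sum_i|c_i-(v_0)_i|\le \sum_v\lambda_v\sum_i|v_i-(v_0)_i|=\sum_v\lambda_v\,d(v_0,v)\le r,
\]
a contradiction. Hence $c\notin\image(f_{2r+1})$, so $f_{2r+1}$ is not surjective and $n(r)\le 2r+1$.

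For \textbf{(2)}, I would exploit the minimality of $n(r)$. First note that $f_n$ is surjective for $n\le r$, since then $\vr{Q_n}{r}$ is a full simplex whose image is all of $\conv(Q_n)=C_n$; thus $n(r)\ge r+1\ge 3$, and in particular $f_{n(r)-1}$ is surjective onto $C_{n(r)-1}$. Each facet of $C_{n(r)}$ is the convex hull of a subcube $Q_{n(r)-1}\le Q_{n(r)}$, and the restriction of $f_{n(r)}$ to the corresponding subcomplex $\vr{Q_{n(r)-1}}{r}$ agrees with $f_{n(r)-1}$ under this identification; surjectivity of $f_{n(r)-1}$ then shows $\image(f_{n(r)})$ contains that entire facet. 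Taking the union over all $2\,n(r)$ facets yields $\image(f_{n(r)})\supseteq \partial C_{n(r)}$, which in particular contains $\partial C_{n(r)-1}$.

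For \textbf{(3)}, connectivity is immediate, since for $r\ge 1$ the complex $\vr{Q_n}{r}$ contains the connected hypercube graph $\vr{Q_n}{1}$ on the same vertex set. For simple connectivity I would compare with $\vr{Q_n}{1}$ in three steps. First, the inclusion $\vr{Q_n}{1}\hookrightarrow\vr{Q_n}{r}$ is $\pi_1$-surjective: any edge $\{u,w\}$ with $d(u,w)=k\le r$ is homotopic rel endpoints, inside $\vr{Q_n}{r}$, to a hypercube geodesic $x_0,\ldots,x_k$, because the fan triangles $\{x_0,x_j,x_{j+1}\}$ all have diameter $j+1\le r$ and hence are $2$-simplices filling the disk bounded by the geodesic and the chord $\{u,w\}$. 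Second, since $[0,1]^n$ is contractible its $2$-skeleton is simply connected, so by van Kampen $\pi_1(\vr{Q_n}{1})$ is the normal closure of the square $4$-cycles $v,\,v+e_i,\,v+e_i+e_j,\,v+e_j$. Third, each such square has diameter $2\le r$ and therefore spans a single (contractible) simplex of $\vr{Q_n}{r}$, so every $4$-cycle is nullhomotopic there. Combining the three steps, the $\pi_1$-surjection kills a normal generating set of its source, forcing $\pi_1(\vr{Q_n}{r})=0$.

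I expect the main technical care to lie in \textbf{(3)}: one must be precise that $\pi_1$ of the hypercube graph is \emph{normally} generated (not freely generated) by the $4$-cycles, and that the fan triangulation genuinely yields a rel-endpoints homotopy inside $\vr{Q_n}{r}$ rather than merely a free homotopy. By contrast, parts \textbf{(1)} and \textbf{(2)} are short once the barycenter and the facet/minimality observations are identified.
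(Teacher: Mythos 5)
Your proposal is correct; parts (1) and (2) follow essentially the paper's lines, while part (3) takes a genuinely more self-contained route. For (1), the paper argues contrapositively: by minimality of $n(r)$, the map $f_{n(r)-1}$ is surjective, so some simplex of diameter at most $r$ covers the center of $C_{n(r)-1}$, and an $\ell^1$-norm comparison (after translating so that the simplex contains the origin) forces $\tfrac{n(r)-1}{2}\le r$. Your direct verification that the barycenter of $C_{2r+1}$ is missed is the same $\ell^1$ computation, packaged via the triangle inequality instead of the WLOG/symmetry step — arguably cleaner. For (2), the paper's proof is a single line (``holds since $f_{n(r)-1}$ is not surjective''), which contains a typo: the intended argument, visible in how the lemma is used in the proof of Theorem~\ref{ThmGeom} and in Example~\ref{ex:geom-r=2}, is exactly yours — each facet of $C_{n(r)}$ lies in the image because $f_{n(r)}$ restricted to a facet subcomplex agrees with the surjective $f_{n(r)-1}$ — and you in fact prove the stronger (and evidently intended) statement $\image(f_{n(r)})\supseteq\partial C_{n(r)}$. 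For (3), both arguments first homotope an arbitrary edge loop, rel endpoints, to a loop of unit-length edges; the paper does this by noting the whole geodesic together with its endpoints spans a single simplex, while your fan triangles accomplish the same thing with slightly more bookkeeping. The two proofs then diverge: the paper finishes by citing the simple connectivity of $\vr{Q_n}{2}$, which follows from the homotopy equivalence of~\cite{adamaszek2022vietoris}, whereas you kill the loop directly by showing that $\pi_1$ of the hypercube graph is normally generated by the square $4$-cycles (van Kampen applied to the $2$-skeleton of the contractible cube complex) and that each square spans a simplex of $\vr{Q_n}{r}$. Your version buys independence from the external $r=2$ classification, at the cost of a little CW/van Kampen machinery; the paper's is shorter given the citation, but logically heavier, since it rests on the full computation of the homotopy type of $\vr{Q_n}{2}$.
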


\begin{proof}
    (1) Choose a simplex $\sigma \in \vr{Q_{n(r)-1}}{r}$ whose image via $f_{n(r)-1}$ contains the center $z=(\frac{1}{2}, \frac{1}{2}, \ldots, \frac{1}{2})$ of the cube. Without loss of generality (due to the symmetry) we may assume $\sigma$ contains the origin $(0, 0, \ldots, 0)$.
    As $||z||_{\ell_1}= \frac{n(r)-1}{2}$, $\sigma$ must contain a vertex of $\ell_1$-norm at least $\frac{n(r)-1}{2}$.
    On the other hand, the $\ell_1$-norm of each vertex of $\sigma$ is at most $r$, due to the inclusion of the origin in $\sigma$. Thus $\frac{n(r)-1}{2} \leq r$, which implies (1).  
    
    (2) Holds since $f_{n(r)-1}$ is not surjective. 

    (3) Choose $r>0$.
    The complex $\vr{Q_n}{r}$ is obviously connected.
    Let $\alpha$ be a based simplicial loop in $\vr{Q_n}{r}$ and let $[v,w]$ be an edge, which is a part of $\alpha$.  Setting $v=v_0$ and $w=v_r$, we can replace $[v,w]$ by a homotopic (rel \{v,w\}) concatenation of edges $[v_0,v_1] * [v_1, v_2]* \ldots* [v_{r-1},v_r]$, whose pairwise distances are at most $1$. In particular, since $v$ and $w$ differ in at most $r$ coordinates, we can choose $v_i$ inductively so that $v_i$ differs from $v_{i+1}$ in at most one coordinate. Thus $\diam\{v_0, v_1, \ldots, v_r\} \leq r$ which means that the defined vertices $v_i$ form a simplex contained in $\vr{Q_n}{r}$. As any simplex is contractible, $[v,w]$ is homotopic (rel \{v,w\}) to the concatenation of edges $[v_0,v_1] * [v_1, v_2]* \ldots* [v_{r-1},v_r]$. 
    
    Replacing each edge of $\alpha$ in this manner we obtain a based homotopic simplicial loop $\beta$, such that the endpoints of all the edges are at distance at most $1$.
    The loop $\beta$ is thus contained $\vr{Q_n}{2}$, which is simply connected by~\cite{adamaszek2022vietoris}, and contained in $\vr{Q_n}{r}$. As a result, $\beta$ and $\alpha$ are contractible loops. As $\alpha$ was arbitrary, this means $\vr{Q_n}{r}$ is simply connected. 
\end{proof}

\begin{proof}[Proof of Theorem~\ref{ThmGeom}]

\textbf{i}: Fix $r\ge 2$.
In order to show \textbf{i}, it is equivalent to assume $\pi_{n-1}(\vr{Q_n}{r})$ is trivial for all $n \in \{1,2, \ldots, n(r)-1\}$, and then prove that $\pi_{N-1}(\vr{Q_{N}}{r})\neq 0$ for $N\coloneqq n(r)$.
This is what we will do.

First, we define $\f \colon \partial C_{N} \to \vr{Q_{N}}{r}$ by induction on the skeleta of $C_{N}$.
For the base case, the 0-skeleton, define $\f|_{C_{N}^{(0)}} $ as the identity on $Q_{N}$, mapping a point of $Q_{N}$ to the corresponding vertex in $\vr{Q_{N}}{r}$.
Now, assume that 
$$
\f \colon C_{N}^{(j)} \to \vr{Q_{N}}{r}
$$
has been defined for some $j \in \{0,1,\ldots, N-2\}$ in a \textbf{subcube-preserving} manner,. i.e., 
\begin{equation}
\label{eq:subcube-preserving}
\forall i \leq j, \ \forall Q_i \leq Q_{N}: \ \f(\conv Q_i) \subseteq \vr{Q_i}{r}.
\end{equation}
Before defining $\f$ on all of $C_N^{(j+1)}$, we first explain how to define $\f$ on a single $(j+1)$-dimensional cube.
Fix $Q_{j+1} < Q_{N}$, and let $C_{j+1} = \conv Q_{j+1}$. 
Define $\f|_{C_{j+1}} \colon C_{j+1} \to \vr{Q_{j+1}}{r}$ as follows:
\begin{itemize}
    \item By \eqref{eq:subcube-preserving} above we have 
    $$
    \f({\partial C_{j+1}}) \subseteq \cup_{Q_j \leq Q_{j+1}} \vr{Q_j}{r} \leq  \vr{Q_{j+1}}{r}.
    $$
    \item By the assumption at the beginning of the proof, $\f|_{\partial C_{j+1}} \colon \partial C_{j+1} \to \vr{Q_{j+1}}{r}$ is contractible and can thus be extended over $C_{j+1}$.
    In particular,  the subcube-preserving condition $\f({ C_{j+1}}) \subseteq   \vr{Q_{j+1}}{r}$ holds.
\end{itemize}
Defining $\f$ on $\conv Q_j$ for each $ Q_j \leq Q_{j+1}$ we obtain a continuous subcube-preserving map $\f$ defined on $\partial C_{N}^{(j+1)}$.
This concludes the inductive step, and thus we obtain a subcube-preserving map 
$$
\f \colon \partial C_{N} \to \vr{Q_{N}}{r}.
$$

Next, we next show that $\f$ is not contractible.
Choose $z\in C_{N} \setminus \partial C_{N}$ such that $z$ is not contained in the image of $f_{N}$.
Let $\nu \colon C_{N} \setminus \{z\} \to \partial C_{N}$ be the radial projection map, which is a retraction.
Define 
$\psi = \nu \circ f_{N} \circ \f \colon \partial C_{N} \to \partial C_{N}$
as the composition of maps
$$
\partial C_{N} \stackrel{\f}{\to} \vr{Q_{N}}{r}\stackrel{f_{N}}{\to} C_{N} \setminus \{z\} \stackrel{\nu}{\to} \partial C_{N},
$$
and note it is a map between topological $(N-1)$-spheres.
Observe that $\psi$ is subcube-preserving, i.e., $\forall Q_j < Q_{N}: \psi(\conv Q_j) \subseteq \conv Q_j$.
Map $\psi \colon \partial C_{N} \to \partial C_{N}$ is homotopic to the identity, as is demonstrated by the linear homotopy 
$$
H \colon \partial C_{N} \times I \to \partial C_{N}, \qquad H(x,t) = (1-t)\f(x) + t x,
$$
which is well-defined by the subcube-preserving property.
As $\psi$ is homotopically non-trivial, so is $\f$.
Since $\vr{Q_{N}}{r}$ is path connected, this implies $\pi_{N} (\vr{Q_{N}}{r})$ is non-trivial regardless of which basepoint is used, giving \textbf{i}.

\textbf{ii}: By the Hurewicz theorem and Lemma \ref{LemGeomGen}(3), the first non-trivial homotopy group of $\vr{Q_m}{r}$ is isomorphic to the corresponding homology group with integer coefficients in the same dimension.
By \textbf{i}, the mentioned dimension is at most $m-1$.

\textbf{iii}: Let $m \in \{1,2, n(r)\}$ be the parameter from the proof of \textbf{i} that satisfies $\pi_{m-1}\vr{Q_{m_r}}{r}\neq 0$.
For $r>2$ we claim that $m-1 < 2^r-1$.
Indeed, $m-1 \le n(r)-1 \leq 2r < 2^r-1$ by Lemma \ref{LemGeomGen}(2). 
So for $r>2$, the dimension ($m-1$ or lower) of the homotopy and homology groups in \textbf{i} and \textbf{ii} is lower than the dimension $2^r-1$ of the $\Psi_{r+1}^{m_r}$ induced invariants, giving \textbf{iii}.
Finally, in the case $r=2$, we have $m=2$ and $\vr{Q_4}{2} \simeq \vee^9 S^3$ by~\cite{adamaszek2022vietoris}.
Since $Q_4$ contains only $8$ copies of $Q_3$, the image of $\Psi_3^4$ induced map on $\pi_3$ is of rank at most $8$; thus the claim \textbf{iii} follows also for $r=2$.
\end{proof}

\section{Conclusion and open questions}
\label{sec:conclusion}

We conclude with a description of some open questions.
We remind the reader of questions from~\cite{adamaszek2022vietoris}, which ask if $\vr{Q_n}{r}$ is always a wedge of spheres, what the homology groups and homotopy types of $\vr{Q_n}{r}$ are for $3\le r\le n-2$, and if $\vr{Q_n}{r}$ collapses to its $(2^r-1)$-skeleton (which would imply that the homology groups $H_q(\vr{Q_n}{r})$ are zero for $q\geq 2^r$).
Below we pose some further questions.

The first four questions are related to the geometric generators in Section~\ref{sec:geometric}.
Understanding the answers to any of them would provide further information about parameters of Theorem~\ref{ThmGeom}. 

\begin{question}
Recall that $n(r)\in\{0,1,\ldots\}$ is the smallest integer $n$ such that $f_n\colon \vr{Q_n}{r}\to[0,1]^n$ is not surjective.
What are the values of $n(r)$ as a function of $r$?
\end{question}

\begin{question}
If $f_n\colon \vr{Q_n}{r}\to[0,1]^n$ is not surjective, then is it necessarily the case that the center $(\frac{1}{2}, \frac{1}{2}, \ldots, \frac{1}{2})$ is not in the image of $f_n$?
\end{question}

\begin{question}
If $f_n\colon \vr{Q_n}{r}\to[0,1]^n$ is surjective, then does their exist a triangulation of $[0,1]^n$ by simplices of diameter at most $r$?
\end{question}

\begin{question}
The following question is based on a StackExchange post~\cite{StackExchangeBalanced}.
A subset $B\subseteq\{0,1\}^n$ is \emph{balanced} if $\frac{1}{|B|}\sum_{b\in B} b = (\frac{1}{2},\ldots,\frac{1}{2})\in\R^n$.
For example, the tetrahedron $\tau_1$ in Example~\ref{ex:geom-r=2} is a set of four vertices that forms a balanced subset.
If $(\frac{1}{2}, \frac{1}{2}, \ldots, \frac{1}{2})$ is in the image of $f_n\colon \vr{Q_n}{r}\to[0,1]^n$, then does there necessarily exist a balanced subset of $\{0,1\}^n$ of diameter at most $r$?
One of the reasons we ask this question is that the answers to the StackExchange post~\cite{StackExchangeBalanced} place constraints on the smallest diameter for a balanced subset of the $n$-dimensional cube.
\end{question}

The remaining questions are more general.

\begin{question}
In Section~\ref{sec:maximal} we described cross-polytopal homology generators.
In Section~\ref{sec:geometric} we described geometric homology generators.
In Non-Example~\ref{nonexample-feng} we described homology generators, due to computations by Ziqin Feng, that are neither cross-polytopal in the sense of Section~\ref{sec:maximal} (arising from an isometric embedding $Q_{r+1}\hookrightarrow Q_n$) nor geometric in the sense of Section~\ref{sec:geometric}.
What other types of homology generators are there for $H_q(\vr{Q_n}{r})$?
\end{question}

\begin{question}
Our main results show how the homology (and persistent homology) of $\vr{Q_p}{r}$ for $1\le p \le m$ place lower bounds on the Betti numbers of $\vr{Q_n}{r}$ for all $n\ge m$.
For every $r\ge 1$, is there some integer $m(r)$ such that our induced lower bounds are tight for all $n\ge m(r)$ and for all homology dimensions?
\end{question}

\begin{question}
The group of symmetries of the $n$-dimensional cube is the hyperoctahedral group.
How does this group act on the homology $H_q(\vr{Q_n}{r})$?
\end{question}



\begin{question}
What homology propogation results can be proven for \v{C}ech complexes of hypercube graphs, as studied in~\cite{adams2022v}?
\end{question}

\section*{Acknowledgements}

We would like to thank the Institute of Science and Technology Austria (ISTA) for hosting research visits, and
we would like to thank John Bush, Ziqin Feng, Michael Moy, Samir Shukla, Anurag Singh, Daniel Vargas-Rosario, and Hubert Wagner for helpful conversations.
The second author was supported by Slovenian Research Agency grants No. N1-0114, J1-4001, and P1-0292.
We acknowledge the Auburn University Easley Cluster for support of this work, via computations carried out by Ziqin Feng.




\bibliographystyle{plain}
\bibliography{VRHypercube-LowerBoundHomology.bib}

\begin{thebibliography}{10}

\bibitem{StackExchangeBalanced}
Smallest diameter of a balanced subset of the hamming cube.
\newblock Mathematics Stack Exchange, 2017.
\newblock
  \url{https://math.stackexchange.com/questions/2402919/smallest-diameter-of-a-balanced-subset-of-the-hamming-cube}.

\bibitem{Adamaszek2013}
Micha{\l} Adamaszek.
\newblock Clique complexes and graph powers.
\newblock {\em Israel Journal of Mathematics}, 196(1):295--319, 2013.

\bibitem{AA-VRS1}
Micha{\l} Adamaszek and Henry Adams.
\newblock The {V}ietoris--{R}ips complexes of a circle.
\newblock {\em Pacific Journal of Mathematics}, 290:1--40, 2017.

\bibitem{adamaszek2022vietoris}
Micha{\l} Adamaszek and Henry Adams.
\newblock On {V}ietoris--{R}ips complexes of hypercube graphs.
\newblock {\em Journal of Applied and Computational Topology}, 6:177--192,
  2022.

\bibitem{AAFPP-J}
Micha{\l} Adamaszek, Henry Adams, Florian Frick, Chris Peterson, and Corrine
  Previte-Johnson.
\newblock Nerve complexes of circular arcs.
\newblock {\em Discrete \& Computational Geometry}, 56:251--273, 2016.

\bibitem{adams2022v}
Henry Adams, Samir Shukla, and Anurag Singh.
\newblock \v{C}ech complexes of hypercube graphs.
\newblock {\em arXiv preprint arXiv:2212.05871}, 2022.

\bibitem{bauer2021ripser}
Ulrich Bauer.
\newblock Ripser: efficient computation of {V}ietoris--{R}ips persistence
  barcodes.
\newblock {\em Journal of Applied and Computational Topology}, pages 391--423,
  2021.

\bibitem{bridson2011metric}
Martin~R Bridson and Andr{\'e} Haefliger.
\newblock {\em Metric spaces of non-positive curvature}, volume 319.
\newblock Springer Science \& Business Media, 2011.

\bibitem{Carlsson2009}
Gunnar Carlsson.
\newblock Topology and data.
\newblock {\em Bulletin of the American Mathematical Society}, 46(2):255--308,
  2009.

\bibitem{carlsson2020persistent}
Gunnar Carlsson and Benjamin Filippenko.
\newblock Persistent homology of the sum metric.
\newblock {\em Journal of Pure and Applied Algebra}, 224(5):106244, 2020.

\bibitem{chazal2009gromov}
Fr{\'e}d{\'e}ric Chazal, David Cohen-Steiner, Leonidas~J Guibas, Facundo
  M{\'e}moli, and Steve~Y Oudot.
\newblock Gromov--{H}ausdorff stable signatures for shapes using persistence.
\newblock In {\em Computer Graphics Forum}, volume~28, pages 1393--1403, 2009.

\bibitem{ChazalDeSilvaOudot2014}
Fr{\'e}d{\'e}ric Chazal, Vin de~Silva, and Steve Oudot.
\newblock Persistence stability for geometric complexes.
\newblock {\em Geometriae Dedicata}, 174:193--214, 2014.

\bibitem{EdelsbrunnerHarer}
Herbert Edelsbrunner and John~L Harer.
\newblock {\em Computational Topology: An Introduction}.
\newblock American Mathematical Society, Providence, 2010.

\bibitem{feng2023homotopy}
Ziqin Feng.
\newblock Homotopy types of {V}ietoris--{R}ips complexes of hypercube graphs.
\newblock {\em arXiv preprint arXiv:2305.07084}, 2023.

\bibitem{feng2023vietoris}
Ziqin Feng and Naga Chandra~Padmini Nukala.
\newblock On {V}ietoris--{R}ips complexes of finite metric spaces with scale
  $2$.
\newblock {\em arXiv preprint arXiv:2302.14664}, 2023.

\bibitem{gasparovic2018complete}
Ellen Gasparovic, Maria Gommel, Emilie Purvine, Radmila Sazdanovic, Bei Wang,
  Yusu Wang, and Lori Ziegelmeier.
\newblock A complete characterization of the one-dimensional intrinsic \v{C}ech
  persistence diagrams for metric graphs.
\newblock In {\em Research in Computational Topology}, pages 33--56. Springer,
  2018.

\bibitem{Gromov}
Mikhail Gromov.
\newblock Geometric group theory, volume 2: Asymptotic invariants of infinite
  groups.
\newblock {\em London Mathematical Society Lecture Notes}, 182:1--295, 1993.

\bibitem{Hausmann1995}
Jean-Claude Hausmann.
\newblock On the {V}ietoris--{R}ips complexes and a cohomology theory for
  metric spaces.
\newblock {\em Annals of Mathematics Studies}, 138:175--188, 1995.

\bibitem{Latschev2001}
Janko Latschev.
\newblock Vietoris--{R}ips complexes of metric spaces near a closed
  {R}iemannian manifold.
\newblock {\em Archiv der Mathematik}, 77(6):522--528, 2001.

\bibitem{majhi2023demystifying}
Sushovan Majhi.
\newblock Demystifying {L}atschev's theorem: {M}anifold reconstruction from
  noisy data.
\newblock {\em arXiv preprint arXiv:2305.17288}, 2023.

\bibitem{saleh2023vietoris}
Nada Saleh, Thomas~Titz Mite, and Stefan Witzel.
\newblock {V}ietoris--{R}ips complexes of platonic solids.
\newblock {\em arXiv preprint arXiv:2302.14388}, 2023.

\bibitem{shukla2022vietoris}
Samir Shukla.
\newblock On {V}ietoris--{R}ips complexes (with scale 3) of hypercube graphs.
\newblock {\em arXiv preprint arXiv:2202.02756}, 2022.

\bibitem{VargasRosarioThesis}
Daniel Vargas-Rosario.
\newblock {\em Persistent Homology of Products and {G}romov--{H}ausdorff
  Distances Between Hypercubes and Spheres}.
\newblock PhD thesis, Colorado State University, 2023.

\bibitem{Vietoris27}
Leopold Vietoris.
\newblock {{\"U}ber den h{\"o}heren Zusammenhang kompakter R{\"a}ume und eine
  Klasse von zusammenhangstreuen Abbildungen}.
\newblock {\em Mathematische Annalen}, 97(1):454--472, 1927.

\bibitem{virk20201}
{\v{Z}}iga Virk.
\newblock 1-dimensional intrinsic persistence of geodesic spaces.
\newblock {\em Journal of Topology and Analysis}, 12:169--207, 2020.

\bibitem{virk2021rips}
{\v{Z}}iga Virk.
\newblock Rips complexes as nerves and a functorial {D}owker-nerve diagram.
\newblock {\em Mediterranean Journal of Mathematics}, 18(2):1--24, 2021.

\bibitem{virk2022contractions}
{\v{Z}}iga Virk.
\newblock Contractions in persistence and metric graphs.
\newblock {\em Bulletin of the Malaysian Mathematical Sciences Society,
  \url{https://doi.org/10.1007/s40840-022-01368-z}}, 2022.

\end{thebibliography}

\appendix

\section{Proofs of numerical identities}

This appendix contains two short proofs of numerical identities, both of which are used in Section~\ref{SubsComparison} in the cases $r=1$ and $r=2$.

\subsection{First proof}
\label{app:1}

Let $S_n=\sum_{i=2}^n (i-1) 2^{i-2}$; we claim $S_n = n2^{n-1}-2^n+1$.
Indeed, note that
\begin{align*}
S_n=2 S_n - S_n&=0 &&+2 &&+2\cdot2^2 &&+3\cdot 2^3 &&+\ \ \ \ \ \ldots &&+(n-2)\cdot 2^{n-2}&&+(n-1)\cdot 2^{n-1}\\
&\ \ \ -1 &&-2\cdot 2 &&-3\cdot2^2 &&-4\cdot 2^3 &&-\ \ \ \ \ \ldots &&-(n-1)\cdot 2^{n-2}\\ &=-1 &&-2 &&-2^2 &&-2^3 &&-\ \ \ \ \ \ldots &&-2^{n-2}&&+(n-1)\cdot 2^{n-1},
\end{align*}
which gives $S_n = -(2^{n-1}-1)+(n-1)2^{n-1} = n2^{n-1}-2^n+1$.

\subsection{Second proof}
\label{app:2}

We prove that $\sum_{i=3}^{n} 2^{i-3} \binom{i}{3}
= 2^{n-2}\binom{n}{3} - \sum_{i=1}^{n-2} 2^{i-1}\binom{i+1}{2}$ by induction on $n$.
For the base case $n=3$, note that both sides equal $1$.
For the inductive step, note that if we assume the formula is true for $n-1$, then as desired we get
\begin{align*}
\sum_{i=3}^{n} 2^{i-3} \binom{i}{3}
&= 2^{n-3} \binom{n}{3} + \left(2^{n-3}\binom{n-1}{3} - \sum_{i=1}^{n-3} 2^{i-1} \binom{i+1}{2}\right) \\
&= 2^{n-3} \binom{n}{3} + 2^{n-3}\binom{n-1}{3}  + 2^{n-3}\binom{n-1}{2}- \sum_{i=1}^{n-2} 2^{i-1} \binom{i+1}{2} \\
&= 2^{n-3} \binom{n}{3} + 2^{n-3}\binom{n}{3}- \sum_{i=1}^{n-2} 2^{i-1} \binom{i+1}{2} \\
&= 2^{n-2}\binom{n}{3} - \sum_{i=1}^{n-2} 2^{i-1}\binom{i+1}{2}.
\end{align*}
\\\\

\end{document}